\providecommand\@dotsep{5}
\def\listtodoname{List of Todos}
\def\listoftodos{\@starttoc{tdo}\listtodoname}
\numberwithin{equation}{section}
\newtheorem{theorem}{Theorem}[section]
\newtheorem{proposition}[theorem]{Proposition}
\newtheorem{lemma}[theorem]{Lemma}
\newtheorem{corollary}[theorem]{Corollary}
\newtheorem{claim}[theorem]{Claim}
\begin{document}

\title[Existence of solution for a class of nonlocal problem ...]
{Existence of solution for a class of nonlocal problem via dynamical methods}
\author{Claudianor O. Alves$^{*}$ and Tahir Boudjeriou}
\address[Claudianor O. Alves ]
{\newline\indent Unidade Acad\^emica de Matem\'atica
	\newline\indent 
	Universidade Federal de Campina Grande 
\newline\indent
e-mail:coalves@mat.ufcg.edu.br
\newline\indent	
58429-970, Campina Grande - PB, Brazil} 

\address[Tahir Boudjeriou]
{\newline\indent Department of Mathematics
	\newline\indent 
	University of Bejaia- Algeria 
\newline\indent
e-mail:re.tahar@yahoo.com}

\pretolerance10000


\begin{abstract}
	\noindent In this paper we use the dynamical methods to establish the existence of nontrivial solution for a class of nonlocal problem of the type 
$$
\left\{\begin{array}{l}
-a\left(x,\int_{\Omega}g(u)\,dx \right)\Delta u =f(u),  \quad x \in \Omega \\
u=0, \hspace{2 cm} x \in \partial \Omega,  
\end{array}\right.
\leqno{(P)}
$$
where $\Omega \subset \mathbb{R}^N \, ( N \geq 2)$ is a smooth bounded domain and $a:\overline{\Omega} \times \mathbb{R} \to \mathbb{R}$ and $g,f: \mathbb{R} \to \mathbb{R}$ are $C^1$-functions that satisfy some technical conditions.  
\end{abstract}

\thanks{$^{*}$ C. O. Alves was partially
supported by  CNPq/Brazil 304804/2017-7 }
\subjclass[2010]{35K60; 34B10;35J15} 
\keywords{Parabolic equations, Nonlocal problem, Second-order elliptic equations}

\maketitle	

\section{Introduction}

This paper concerns with the existence of nontrivial solutions for a nonlocal problem of the type 
$$
\left\{\begin{array}{l}
-a\left(x,\int_{\Omega}g(u)\,dx \right)\Delta u =f(u),  \quad x \in \Omega \\
u=0, \hspace{2 cm} x \in \partial \Omega  
\end{array}\right.
\leqno{(P)}
$$
where $\Omega \subset \mathbb{R}^N \, ( N \geq 2)$ is a smooth bounded domain and $a:\overline{\Omega} \times \mathbb{R} \to \mathbb{R}$ and $g,f: \mathbb{R} \to \mathbb{R}$ are $C^1$-functions that satisfy some technical conditions, which will be mentioned later on. 

Hereafter, we will assume that there exist $a_0,K>0$ such that 
\noindent   
$$
a_0 \leq a(x,t), \quad \forall (x,t) \in \overline{\Omega} \times \mathbb{R} \leqno{(a_1)}
$$
and  
$$
a(x,t)=1, \quad \forall x \in \overline{\Omega} \quad \mbox{and} \quad |t| \geq K. \leqno{(a_2)}
$$

With relation to the functions $f,g:\mathbb{R} \to \mathbb{R}$ we  assume the conditions  below that can depend on the dimension $N$. \\

\noindent {\bf Condition $(H)$}: $g(t) \geq 0$ for all $t \in \mathbb{R}$ and there is $C>0$ such that 
$$
\max\{|f(t)|^{2},|f(t)t|\} \leq C(g(t)+1), \quad \forall t \in \mathbb{R}.
$$

\noindent {\bf Condition $(g)$}:\\

\noindent {\bf Dimension $N=2$}:  
The functions ${g}$ and $g'$ have an exponential subcritical growth at infinity, that is, 
$$
\lim_{|t| \rightarrow \infty} \frac{g(t)}{e^{\beta t^2}} = \lim_{|t| \rightarrow \infty} \frac{g'(t)}{e^{\beta t^2}}=
0\quad \mbox{for all}\quad \beta > 0. \leqno{(I)}
$$
This condition combined with $(H)$ ensures that $f$ also has an exponential subcritical growth at infinity.

In dimension two an important tool is the Trudinger and  Moser inequality that states the following: For all $u\in W_{0}^{1,N}(\Omega)$ ($N\geq 2$), 
\begin{equation}\label{M}
e^{\alpha |u|^{\frac{N}{N-1}}}\in L^{1}(\Omega),\;\;\forall \alpha >0 \quad (\mbox{see}  \mbox{\cite{M}} )
\end{equation}
and there exist positive constants $C_{N}$ and $\alpha_{N}$ such that
\begin{equation}\label{T}
\sup_{||u||_{W_{0}^{1,N}(\Omega)}\leq 1}\int_{\Omega} e^{\alpha |u(x)|^{\frac{N}{N-1}}}\,dx\leq C_{N}, \;\forall \alpha \leq \alpha_{N}\;(\alpha_{N}:=\text{vol. unit sphere}). \quad (\mbox{see}  \mbox{\cite{T}} )
\end{equation}

\noindent {\bf Dimension $N \geq 3$}:  \\

\noindent There are $C_1>0$ and $q \in [2,2^{*}]$ such that 
$$
|g(t)| \leq C_1(1+|t|^{q}), \quad \quad \forall t \in \mathbb{R},
$$
where $2^*=\frac{2N}{N-2}$. \\

\noindent {\bf Condition $(f_1)$}:\\

\noindent {\bf Dimension $N = 2$}: 
$$
\lim_{|t| \rightarrow \infty} \frac{f'(t)}{e^{\beta t^2}}=0\quad \mbox{for all}\quad \beta > 0.
$$

\noindent {\bf Dimension $N \geq 3$}:  \\

\noindent There are $C_2>0$ and $p \in (1,q/2)$ such that 
$$
|f(t)| \leq C_2(1+|t|^{p}), \quad \quad \forall t \in \mathbb{R}.
$$

\noindent {\bf Condition $(f_2)$:}
$$
\lim_{t\to 0}\frac{f(t)}{t}=0. 
$$

\noindent {\bf Condition $(f_3)$:} \\
\noindent There exists $\gamma >0$ such that 
$$ 
f(s)s\geq (2+\gamma)F(s)>0, \;\forall s\in \mathbb{R} \setminus \{0\} \quad (\mbox{Ambrosetti-Rabinowitz condition}). 
$$
\mbox{}\\
The condition $(f_3)$ implies that there are $c_1,c_2 \geq 0$ such that
$$
F(t) \geq c_1|t|^{2+\gamma}-c_2, \quad \forall t \in \mathbb{R}.
$$
Thus,
\begin{equation} \label{NEWEQ1}
|t|^{2+\gamma} \leq c_3f(t)t+c_4 \quad \forall t \in \mathbb{R},
\end{equation}
for some constants $c_3,c_4 \geq 0$. 

\vspace{1 cm}

The interest by problem $(P)$ comes from the articles of Alves and Covei \cite{AlvesCovei}, Alves, Chipot and Corr\^ea \cite{AlvesCorreaChipot}, Chipot and Lovat \cite{ChipotLovat1,ChipotLovat2}, Chipot and Rodrigues \cite{ChipotRodigues}, Chipot and Corr\^ea \cite{ChipotCorrea} and Corr\^ea, Menezes and Ferreira \cite{CorreaMenezesFerreira} and  Gasi\'nki and Santos J\'unior \cite{joao}, where the authors study classes of nonlocal problems motivated by the fact
that they appear in some applied mathematics areas. More exactly, it is pointed out in the paper \cite[see pp. 4619-4620]{ChipotLovat1}, that
if $g(t)=t$, the solution $u$ of the problem $(P)$ could describe the density of a population subject
to spreading where the diffusion coefficient $a$ is supposed to depend on the entire population in the domain rather than
on the local density. Moreover, in \cite{ChipotLovat1}, the authors have mentioned that the importance of such model lies in the fact that measurements that serve to determine physical constants are not made at a point but represent an average in a neighborhood of a point so that these physical constants depend on local averages.

In what follows, in order to apply our approach we will rewrite problem $(P)$ in the form

$$
\left\{\begin{array}{l}
-\Delta u =f(u)+\Psi\left(x,u,\int_{\Omega}g(u)\,dx \right),  \quad x \in \Omega \\
u=0, \hspace{2 cm} x \in \partial \Omega,  
\end{array}\right.
\leqno{(P)'}
$$
where
\begin{equation} \label{psi}
\Psi(x,t,z)=\left(\frac{1}{a(x,z)}-1\right)f(t), \quad \forall (x,t,z) \in \overline{\Omega} \times \mathbb{R} \times \mathbb{R}. 
\end{equation}

From $(a_2)$,
\begin{equation} \label{eq1o}
\Psi(x,t,z)=0, \quad \forall (x,t) \in \overline{\Omega} \times \mathbb{R} \quad \mbox{and} \quad |z| \geq K. 
\end{equation}

Since the problem $(P)'$ is not variational we will apply dynamical methods to find a nontrivial solution for $(P)'$. This method consists in studying the parabolic problem associated with $(P)'$ given by    
\begin{equation}\label{1}
\left\{\begin{array}{l}
u_{t}-\Delta u =f(u)+\Psi\left(x,u,\int_{\Omega}g(u)\,dx \right)\;\text{in}\;\;\; \Omega \times (0,T), \\
u=0\;\;\;\;\hspace{2cm} \text{on}\;\partial \Omega \times [0,T], \\
u_{|_{t=0}}=u_{0},\hspace{1.5cm}\; \; x\in \Omega, 
\end{array}\right.
\end{equation}
for a special choice of $u_{0} \in  H_{0}^{1}(\Omega)$ that will guarantee the existence of a solution $u:[0,+\infty) \to H^{1}_{0}(\Omega)$ such that for some $t_n \to +\infty$, there is a nontrivial solution $u_s$ of $(P)'$ such that $u(t_n) \to u_s$ in  $H^{1}_{0}(\Omega)$ when $n\to +\infty$.

This type of approach has been considered by Quittner \cite{Q} to establish the existence of a nontrivial solution for the follow class of non variational elliptic problem
$$
\left\{\begin{array}{l}
-\Delta u =f(u)+\tilde{f}(x,u,\nabla u),  \quad x \in \Omega \\
u=0, \hspace{2 cm} x \in \partial \Omega.  
\end{array}\right.
\leqno{(P_1)}
$$
In \cite{Q1}, Quittner also used the dynamical methods to prove the existence of signed solution for the following class of problem 
$$
\left\{\begin{array}{l}
-\Delta u =u_{+}^p-u_{-}^{q},  \quad x \in \Omega, \\
u=0, \hspace{2 cm} x \in \partial \Omega,  
\end{array}\right.
\leqno{(P_2)}
$$
where $0<q<1<p, N<\frac{N+2}{N-2}$ if $N>2$, $u_+=\max\{u,0\}$ and $u_-=\max\{-u,0\}$.

\begin{theorem} \label{mainthm} Assume that $(H),(a_1),(a_2),(f_1)-(f_3)$ and $(g)$ hold. Then, problem $(P)$ has a nontrivial solution.
	
\end{theorem}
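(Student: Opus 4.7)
The plan is to follow the dynamical strategy announced before the statement: we associate to $(P)'$ its parabolic companion \eqref{1}, select an initial datum $u_{0}$ with favourable energy and ``mass'' properties, and show that the corresponding global-in-time solution $u(t)$ converges along a sequence $t_{n}\to+\infty$ to a nontrivial stationary solution $u_{s}$ of $(P)'$, which then solves $(P)$.

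First, I would establish local well-posedness of \eqref{1} in $H^{1}_{0}(\Omega)$. The growth hypotheses $(g)$ and $(f_{1})$, combined with the Trudinger-Moser inequalities \eqref{M}, \eqref{T} when $N=2$ or the Sobolev embedding when $N\ge 3$, make the Nemytskii map $v\mapsto f(v)+\Psi\bigl(\cdot,v,\int_{\Omega}g(v)\,dx\bigr)$ locally Lipschitz from $H^{1}_{0}(\Omega)$ into $L^{2}(\Omega)$; bound $(a_{1})$ guarantees that $1/a$, and thus $\Psi$, is uniformly bounded in its last argument. Standard semigroup theory then yields a unique maximal mild solution $u\in C([0,T_{\max});H^{1}_{0}(\Omega))$.

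Next, I would introduce the local functional $J(u)=\frac{1}{2}\int_{\Omega}|\nabla u|^{2}\,dx-\int_{\Omega}F(u)\,dx$, which by $(f_{2})$--$(f_{3})$ has mountain-pass geometry. Testing \eqref{1} with $u_{t}$ yields
\begin{equation*}
\frac{d}{dt}J(u(t))=-\|u_{t}(t)\|_{L^{2}}^{2}+\int_{\Omega}\Psi\Bigl(x,u(t),\int_{\Omega}g(u(t))\,dx\Bigr)u_{t}(t)\,dx,
\end{equation*}
and Young's inequality gives
\begin{equation*}
\frac{d}{dt}J(u(t))\le -\tfrac{1}{2}\|u_{t}(t)\|_{L^{2}}^{2}+\tfrac{1}{2}\|\Psi(\cdot,u(t),\textstyle\int_{\Omega}g(u(t))\,dx)\|_{L^{2}}^{2}.
\end{equation*}
The key observation is that by \eqref{eq1o} the perturbation $\Psi$ vanishes whenever $\int_{\Omega}g(u(t))\,dx\ge K$, while $(H)$ yields the uniform bound $\|\Psi\|_{L^{2}}^{2}\le C\bigl(\int_{\Omega}g(u(t))\,dx+|\Omega|\bigr)$ in the complementary region; hence the error term is controlled as long as the nonlocal mass stays bounded. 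I would then choose $u_{0}$ so that simultaneously $\int_{\Omega}g(u_{0})\,dx\ge K$ (so $\Psi\equiv 0$ initially) and $J(u_{0})<0$, which is possible thanks to the superquadratic behaviour encoded in $(f_{3})$. Using the energy inequality together with the coercivity bound \eqref{NEWEQ1} derived from $(f_{3})$, one gets a uniform a priori estimate of $u(t)$ in $H^{1}_{0}(\Omega)$, and therefore $T_{\max}=+\infty$.

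Integrating the energy inequality and using that $J$ is bounded below along the orbit forces $\int_{0}^{\infty}\|u_{t}\|_{L^{2}}^{2}\,dt<\infty$. Picking $t_{n}\to\infty$ with $u_{t}(t_{n})\to 0$ in $L^{2}(\Omega)$ and using the subcritical growth of $f$, $g$ to upgrade a weak limit of $\{u(t_{n})\}$ to a strong one in $H^{1}_{0}(\Omega)$, one passes to the limit in the equation to obtain a solution $u_{s}$ of $(P)'$. The main obstacle is proving $u_{s}\not\equiv 0$: because $J$ need not be a strict Lyapunov functional in the region $\{\int_{\Omega}g(u)\,dx<K\}$, one must rule out that the orbit drifts to zero. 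I would tackle this by showing that for the chosen initial datum the region $\{\int_{\Omega}g(u(t))\,dx\ge K\}$ is essentially forward-invariant, so that $\Psi\equiv 0$ along the entire trajectory; then $J$ acts as a genuine Lyapunov function, $J(u(t))\le J(u_{0})<0=J(0)$ for all $t\ge 0$, and this strict energy gap forbids convergence to zero and delivers a nontrivial $u_{s}$.
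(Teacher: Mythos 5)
Your overall framework (local well-posedness via the locally Lipschitz Nemytskii map, the energy identity for $E(u(t))$ along trajectories, the observation that $\Psi$ vanishes when $\int_\Omega g(u)\,dx\ge K$ and is uniformly $L^2$-bounded otherwise, and the passage from $u_t(t_n)\to 0$ to a stationary limit) matches the paper. But the step where you select the initial datum is where the argument breaks down, and it is precisely the step that carries the whole proof. You propose to take $u_0$ with $E(u_0)<0$ and $\int_\Omega g(u_0)\,dx\ge K$ and to deduce a uniform $H^1_0$ a priori bound and global existence. This fails for two reasons. First, for a superquadratic nonlinearity satisfying the Ambrosetti--Rabinowitz condition $(f_3)$, negative-energy data of large norm do not produce global bounded orbits: the paper's Proposition \ref{Prp1} shows by the concavity method that such data blow up in finite time ($T(u_0)<+\infty$). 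The energy inequality only bounds $E(u(t))$ from above, and since $E$ is unbounded below this gives no $H^1_0$ control; on the contrary, $E(u(t))\le E(u_0)<0$ is exactly the hypothesis that drives the blow-up argument. Second, the claimed forward invariance of the region $\{\int_\Omega g(u(t))\,dx\ge K\}$ is not justified by anything in the hypotheses: $\int_\Omega g(u(t))\,dx\ge K$ only tells you $V'_{u_0}(t)\le 0$ \emph{while} the orbit stays there, and nothing prevents the orbit from leaving that region, after which $E$ is no longer monotone.

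The missing idea is the choice $u_0\in\partial D_A$, the boundary of the domain of attraction of $0$. This set is nonempty because $0$ is asymptotically stable (so $\mathring{D}_A\ne\emptyset$, by $(f_2)$ and $\Phi'(0)=0$) while large negative-energy data blow up (so $D_A\ne H^1_0(\Omega)$); a connectedness argument on a segment $[sv,tv]$ then produces a boundary point. For such $u_0$ the paper proves, in order: $V_{u_0}$ is bounded (Lemma \ref{lm1}, by approximating $u_0$ with data in $D_A$ and using the sign of $V'$ when $\|u(t)\|$ is large); a quantitative version $\|u(t)\|\ge K_*\Rightarrow V'_{u_0}(t)<-\delta$ via the Palais--Smale condition (Lemmas \ref{PS} and \ref{stl}); global existence (Lemma \ref{TINF}); boundedness of the orbit (Proposition \ref{tth1}); and finally nontriviality of the $\omega$-limit, which comes for free: if $0\in\omega(u_0)$ then asymptotic stability of $0$ and continuous dependence would force $u_0\in\mathring{D}_A$, contradicting $u_0\in\partial D_A$. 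Your proposed energy-gap argument ($E(u(t))\le E(u_0)<0=E(0)$) is an attempt to substitute for this last step, but it relies on the unproven invariance above and on an initial datum that does not give a global solution, so the proposal as written does not close.
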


The Theorem \ref{mainthm} completes the study made in the papers above mentioned involving the nonlocal problem, because we are considering a new class of function $a$ and the method used in the proof, that involves dynamical methods, is new for this class of problem. Moreover it is important point out that our main result includes the local case, that is, the case where $a(x,t)=1$ for all $(x,t) \in \overline{\Omega} \times \mathbb{R}$.  We would like point out the in the proof of Theorem \ref{mainthm} it was used some ideas found in \cite{Q}.
 
Before concluding this section, we would like to show two examples where we can apply Theorem \ref{mainthm}. In both of them, we will consider the function $a:\overline{\Omega} \times \mathbb{R} \to \mathbb{R}$ of the type
$$
a(x,t)=1+B(x)h(t), \quad \forall (x,t) \in \overline{\Omega} \times \mathbb{R},
$$
where $B$ and $h$ are $C^{1}$ bounded functions,  $\displaystyle \inf_{(x,t) \in \overline{\Omega} \times \mathbb{R}}B(x)h(t)>-1$, and there is $K>0$ such that $h(t)=0$ for $|t| \geq K$.\\

\noindent {\bf Example 1: $N=2$}
$$
\left\{\begin{array}{l}
-a\left(x,\int_{\Omega}e^{|u|^{\xi}}\,dx \right)\Delta u =|u|^{p-2}ue^{|u|^{\tau}},  \quad x \in \Omega \\
u=0, \hspace{2 cm} x \in \partial \Omega,  
\end{array}\right.
\leqno{(P)}
$$ 
where $1<2\tau< \xi<2$ and $p \in (2,+\infty)$. \\

\noindent {\bf Example 2: $N \geq 3$}
$$
\left\{\begin{array}{l}
-a\left(x,\int_{\Omega}|u|^{q}\,dx \right)\Delta u =|u|^{p-1}u+|u|^{r-1}u,  \quad x \in \Omega \\
u=0, \hspace{2 cm} x \in \partial \Omega,  
\end{array}\right.
\leqno{(P)}
$$ 
where $ q \in [1,2^*]$ and  $p,r \in (1,q/2)$.

\section{Local Existence}

In this section, we are going to apply the semigroup theory to show the local existence of the solution of problem $\eqref{1}$. In the sequel, we will denote by 
$$
X=L^{2}(\Omega),\;\; A=-\Delta, 
$$
with 
$$ 
D(A)=H^{2}(\Omega)\cap H_{0}^{1}(\Omega). 
$$
By Lions and Magenes \cite{LM}, we know that 
$$
 D(A^{\frac{1}{2}})=H_{0}^{1}(\Omega).
$$
Hereafter $\|\;\;\;\|$ denotes the usual norm in $H_{0}^{1}(\Omega)$, that is, 
$$
\|u\|=\left(\int_{\Omega}|\nabla u|^{2}\,dx \right)^{\frac{1}{2}}.
$$

Our first step is to prove that the operator $\hat{f} :H_{0}^{1}(\Omega) \rightarrow X$ given by 
$$
\hat{f}(u)(x)=f(u(x)), \quad \forall x \in \Omega,
$$
is a locally Lipschitz, i.e, 
	\begin{equation}\label{L12}
	||\hat{f}(u)-\hat{f}(v)||_{X} \leq L_0||u-v||,\;\;\forall u, v\in V, 
	\end{equation}
	where $V$ is a bounded subset of $H_{0}^{1}(\Omega)$ defined by 
	$$
	V:=\{u \in H_{0}^{1}(\Omega) \,|\;\; ||u|| \leq \delta\}, 
	$$
for $\delta>0$. In what follows, for the reader's convenience we will show (\ref{L12}) for $N=2$, the case $N \geq 3$ can be done of a similar way.  Since $f \in C^{1}(\mathbb{R},\mathbb{R})$,
	$$
	 f(u)-f(v)=\int_{0}^{1}\frac{d}{ds}f(v+s(u-v))\,ds=(u-v)\int_{0}^{1}f'(v+s(u-v))\,ds,
	$$
then by $(f_1)$,
	$$
	 |f(u)-f(v)|\leq |u-v|\int_{0}^{1}|f'(v+s(u-v))|\,ds\leq |u-v|c_{\beta}e^{\beta (|v|+|u|)^2}.
	 $$
	By integration with respect to $x$ and using H\"older and Sobolev embedding, we get 
	$$
	||\hat{f}(u)-\hat{f}(v)||_{X}\leq c\|u-v\|_{L^{4}(\Omega)}\left(\int_{\Omega}c_{\beta}e^{4\beta (|v|+|u|)^{2}}\,dx\right)^{1/4} \leq c_1\|u-v\|\left(\int_{\Omega}c_{\beta}e^{4\beta (|v|+|u|)^{2}}\,dx\right)^{1/4}.
	$$
For $\|u\|,\|v\| \leq \delta$, we have that
$$
\||u|+|v|\| \leq 2\delta,
$$
and so, choosing  $\beta \in (0, \pi/4\delta^{2})$ and applying $\eqref{T}$ we deduce that $\eqref{L12}$ holds. By condition $(g)$, we also have that the operator $\tilde{\Psi}: H_0^{1}(\Omega) \to X$ given by 
$$
\tilde{\Psi}(u)(x)=\Psi\left(x,u,\int_{\Omega}g(u)\,dx \right), \quad \forall x \in \Omega,
$$
where $\Psi$ was given in (\ref{psi} ), is  locally Lipschitz, i.e, 
\begin{equation}\label{L1}
||\tilde{\Psi}(u)-\tilde{\Psi}(v)||_{X} \leq L_1||u-v||,\;\;\forall u,v\in V, 
\end{equation}
for some $L_1>0$. From this, the operator $\Phi: H_0^{1}(\Omega) \to X$ given by
\begin{equation} \label{tildef}
\Phi(u)=\hat{f}(u)+\tilde{\Psi}(u)
\end{equation}
is locally Lipschitz, i.e, 
\begin{equation}\label{L1*}
||\Phi(u)-\Phi(v)||_{X} \leq L||u-v||,\;\;\forall u,v \in V,
\end{equation}
for some $L>0$.

Arguing as in \cite[Theorem 3.3.3.]{D}, problem $\eqref{1}$ can be converted into the initial value problem for the first-order abstract evolution equation 
\begin{equation}\label{eq1}
\left\{
\begin{array}{l}
\frac{du}{dt}+Au=\Phi(u),\;\;0< t< T, \\
u(0)=u_{0}.
\end{array}\right.
\end{equation}
Hence, the existence of solution of $\eqref{eq1}$ is equivalent to look for fixed point of the operator 
\begin{equation}\label{in1}
G(u)(t)=e^{-At}u_{0}+\int_{0}^{t}e^{-A(t-s)}\Phi(u(s))\,ds. 
\end{equation}

For $\delta >0$, we set 

$$
S=\left\{u\in C\left([0,T], H_{0}^{1}(\Omega) \right)\big|\;\; \max_{t\in [0,T]}||u(t)-u_0|| \leq \delta \right\}.
$$
It is clearly that $Y=C\left([0,T], H_{0}^{1}(\Omega))\right)$ is a Banach space for the norm 
$$
||u||_{Y}=\max_{t\in [0,T]}||u(t)||,
$$
from where it follows that $S$ is a complete metric space with the metric $d:S \times S \to \mathbb{R}$ given by 
$$
d(u,v)=\|u-v\|_{Y}.
$$

In the sequel we are going  to show that $G$ maps $S$ into itself, and $G$ strict contraction for $T$ small enough. First of all, we would like point out that $G(u):[0,T] \to H_0^{1}(\Omega)$ is a continuous function for each $u \in S$, see proof of \cite[Lemma 3.3.2]{D}. Next, we will prove that $G(S) \subset S$. Have this in mind,   note that 
\begin{equation} \label{eqq1}
||G(u)(t)-u_0|| \leq ||e^{-tA}u_{0}-u_0||+\int_{0}^{t}||e^{-A(t-s)}\Phi(u(s))||\,ds.
\end{equation}
From \cite[Theorems 1.3.4 and 1.4.3]{D} we can estimate  $||e^{-tA}u_{0}-u_0||$ and $||e^{-A(t-s)}\Phi(u(s))||$ as follows:
\begin{equation}\label{eqq2} ||e^{-tA}u_{0}-u_0|| \leq \delta/2, \quad \forall t \in [0,T]
\end{equation}
and
\begin{equation}\label{eqq3} ||e^{-(t-s)A}\Phi(u(s))||=||A^{\frac{1}{2}}e^{-(t-s)A}\Phi(u(s))||_{X}\leq  \frac{M||\Phi(u(s))||_{X}}{(t-s)^{1/2}}.
\end{equation}
Let $B_*=\displaystyle \max_{t\in [0,T]} ||\Phi(u(t))||_{X}$ and choose $T$ small enough such that 
$$
2MB_*T^{1/2}\leq \frac{\delta}{2}.
$$
From $\eqref{eqq1}$, $\eqref{eqq2}$ and $\eqref{eqq3}$,  
$$ 
||G(u)(t)-u_0|| < \frac{\delta}{2} +MB_*\int_{0}^{T}r^{-1/2}\,dr=\frac{\delta}{2} + 2MB_*<\delta, \quad \forall t \in [0,T],
$$
showing that $G(S) \subset S$.

New, we will prove that of $T$ is small enough, then $G:S \to S$ is a contraction. In fact, for any $u,v\in S$ for $0\leq t\leq T$, we have 
$$ ||G(u)(t)-G(v)(t)||\leq \int_{0}^{t}||A^{\frac{1}{2}}e^{-A(t-s)}||_{X}||\Phi(u(s))-\Phi(v(s))||_{X}\,ds\leq 2MLT^{1/2}||u-v||_{Y}.$$
Thus, if we assume 
$$ T< \frac{1}{8M^{2}L^{2}}, $$
we get
$$
d(G(u),G(v))=||G(u)-G(v)|| \leq \frac{1}{2}||u-v||_{Y}=\frac{1}{2}d(G(u),G(v)), 
$$
showing the desired result. Consequently, by Banach Fixed Point Theorem, $G$ has a unique fixed point $u$ in $S$, which is a {\it mild solution} of $\eqref{in1}$.\\

As a consequence of the results found in \cite[Theorem 3.2.2]{D}, we have the following regularity result.

\begin{lemma}\label{Rem1}
The mild solution $u:[0,T] \rightarrow H^{1}_0(\Omega)$ of (\ref{1}) given by 
\begin{equation}\label{in1*}
u(t)=e^{-At}u_{0}+\int_{0}^{t}e^{-A(t-s)}\Phi(u(s))\,ds 
\end{equation}
is continuous from $[0,T] \rightarrow H^{1}_0(\Omega)$ and locally H\"older continuous from $(0,T) \rightarrow H^{1}_0(\Omega)$. Hence, $t\mapsto \Phi(u(t))$ is locally H\"older continuous on $(0,T)$, and so, $u$ is a strong unique solution of (\ref{1}) in $(0,T)$.
\end{lemma}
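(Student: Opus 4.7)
The plan is to invoke the classical parabolic regularity machinery for analytic semigroups (essentially the content of Henry's \cite[Theorem 3.2.2]{D}) applied to the mild solution $u$ produced above. The argument proceeds in three stages: first, upgrade the continuity of $u:[0,T]\to H_0^1(\Omega)$ to local H\"older continuity on $(0,T)$; second, use (\ref{L1*}) to transfer this regularity to the composition $t\mapsto \Phi(u(t))$; third, apply the standard variation-of-constants regularity theorem to conclude that $u$ is in fact a strong solution.

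For the first stage, for $0<s<t\leq T$ I would start from the mild formula
\begin{equation*}
u(t)-u(s)=(e^{-(t-s)A}-I)u(s)+\int_{s}^{t}e^{-(t-r)A}\Phi(u(r))\,dr.
\end{equation*}
Since $A=-\Delta$ is sectorial on $X=L^2(\Omega)$, the analytic semigroup $e^{-tA}$ satisfies
\begin{equation*}
\|A^{\alpha}e^{-tA}\|_{\mathcal{L}(X)}\leq M_\alpha t^{-\alpha},\qquad \|(e^{-tA}-I)A^{-\beta}\|_{\mathcal{L}(X)}\leq C_\beta t^{\beta},
\end{equation*}
for $t>0$ and $\alpha,\beta\in(0,1)$. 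Combining this with the boundedness of $\Phi(u(\cdot))$ on $[0,T]$ and with the bootstrap fact that $u(s)\in D(A^{1/2+\theta})$ for $s>0$ and small $\theta>0$ (which follows by differentiating the mild formula using the smoothing of $e^{-sA}$), I can estimate the first term by $C_\theta|t-s|^{\theta}$ and the integral term by $M|t-s|^{1/2}\max_{[0,T]}\|\Phi(u)\|_X$. This gives $\|u(t)-u(s)\|\leq C|t-s|^{\mu}$ for some $\mu\in(0,1)$, uniformly on compact subintervals of $(0,T)$.

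For the second stage, since $u$ remains in the bounded set $S$ on which $\Phi$ is Lipschitz with constant $L$, (\ref{L1*}) yields
\begin{equation*}
\|\Phi(u(t))-\Phi(u(s))\|_{X}\leq L\|u(t)-u(s)\|\leq CL|t-s|^{\mu},
\end{equation*}
so $h(t):=\Phi(u(t))$ is locally H\"older continuous on $(0,T)$ with values in $X$. For the third stage, the classical abstract parabolic regularity result (Henry \cite[Theorem 3.2.2]{D}) applied with this H\"older inhomogeneity guarantees that
\begin{equation*}
u(t)=e^{-tA}u_{0}+\int_{0}^{t}e^{-A(t-s)}h(s)\,ds
\end{equation*}
belongs to $C((0,T);D(A))\cap C^{1}((0,T);X)$ and satisfies $u_{t}+Au=h(t)$ pointwise on $(0,T)$; uniqueness is inherited from the Banach fixed-point argument. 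The main obstacle I anticipate lies in the bootstrapping step of stage one: justifying $u(s)\in D(A^{1/2+\theta})$ for $s>0$ with controlled norms requires a careful accounting of the singularity of the semigroup estimates as $s\to 0$, and it is precisely here that the locally Lipschitz property (\ref{L1*}) of $\Phi$ from $H_{0}^{1}(\Omega)$ into $L^{2}(\Omega)$ — proved earlier by exploiting the Trudinger--Moser inequality for $N=2$ and Sobolev embeddings for $N\geq 3$ — is essential.
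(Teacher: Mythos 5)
Your proposal is correct and follows essentially the same route as the paper, which simply invokes Henry's abstract regularity theorem \cite[Theorem 3.2.2]{D}; you have in addition spelled out the standard bootstrap behind that theorem (H\"older continuity of the mild solution via the semigroup estimates, transfer to $\Phi(u(\cdot))$ by the local Lipschitz property (\ref{L1*}), then the variation-of-constants regularity result). The details you supply, including the $D(A^{1/2+\theta})$ smoothing step, are the correct ones and fill in what the paper leaves implicit.
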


Next, we will show two important results involving the solution $u$  that are crucial in our approach. The first result shows that the continuous dependence of the solutions also hold with the problem (\ref{1}), while the second one establishes the asymptotic behavior of the solutions. From now on, $T(u_0)$ the maximal existence time of this solution in  $H^{1}_{0}(\Omega)$, $J(u_0) := [0, T(u_0))$ and $\mathring{J}(u_0) := (0, T(u_0))$.

\begin{lemma} \label{CD} Let $u_0 \in H_0^{1}(\Omega)$ and assume that there is a sequence $(u_n) \subset H_0^{1}(\Omega)$ such that $T(u_n)=+\infty$ and $u_n \to  u_0$ in $H_0^{1}(\Omega)$. Then, for each $t \in {J}(u_0)$ there holds
	$$
	u(t,u_n) \to u(t,u_0) \quad \mbox{in} \quad H_0^{1}(\Omega) \quad  \mbox{as} \quad n \to +\infty.
	$$	
\end{lemma}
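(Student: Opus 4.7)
The plan is to combine the mild-solution integral representation from \eqref{in1*} with a singular Gronwall inequality, controlled by a continuation (bootstrap) argument. First I would fix $t^* \in J(u_0)$, pick any $\bar{t}$ with $t^* < \bar{t} < T(u_0)$, and exploit the fact that $u(\cdot,u_0) : [0,\bar{t}] \to H_0^1(\Omega)$ is continuous to conclude that its image is bounded, say contained in the ball $B_R = \{v \in H_0^1(\Omega) : \|v\| \leq R\}$. I would then enlarge this to $V = \{v : \|v\| \leq R+1\}$ and apply \eqref{L1*} to obtain a uniform Lipschitz constant $L = L(V) > 0$ for the operator $\Phi$ on $V$.

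Next, writing both $u(\cdot,u_n)$ and $u(\cdot,u_0)$ via \eqref{in1*} and subtracting, I would estimate, as long as $u(s,u_n), u(s,u_0) \in V$,
\begin{equation*}
\|u(s,u_n) - u(s,u_0)\| \leq \|e^{-sA}(u_n - u_0)\| + \int_0^s \|A^{1/2} e^{-(s-r)A} [\Phi(u(r,u_n)) - \Phi(u(r,u_0))]\|_X \, dr.
\end{equation*}
Using the boundedness of the analytic semigroup and the bound \eqref{eqq3}, this yields
\begin{equation*}
\|u(s,u_n) - u(s,u_0)\| \leq C \|u_n - u_0\| + ML \int_0^s (s-r)^{-1/2} \|u(r,u_n) - u(r,u_0)\| \, dr.
\end{equation*}
Henry's singular Gronwall inequality (see \cite{D}) then furnishes a constant $K = K(t^*) > 0$, independent of $n$, such that $\|u(s,u_n) - u(s,u_0)\| \leq K \|u_n - u_0\|$ for every $s$ where the trajectory $u(\cdot,u_n)$ stays in $V$.

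To close the loop, I would define
\begin{equation*}
\sigma_n = \sup\{\sigma \in [0,t^*] : \|u(s,u_n) - u(s,u_0)\| \leq 1 \text{ for all } s \in [0,\sigma]\},
\end{equation*}
which is positive for $n$ large by continuity at $s = 0$. On $[0,\sigma_n]$ the trajectory $u(\cdot,u_n)$ lies in $V$ (since $\|u(s,u_0)\| \leq R$), so the Gronwall bound applies and gives $\|u(s,u_n) - u(s,u_0)\| \leq K\|u_n - u_0\| < \tfrac{1}{2}$ for $n$ sufficiently large. If $\sigma_n < t^*$, continuity would force $\|u(\sigma_n,u_n) - u(\sigma_n,u_0)\| = 1$, contradicting the bound; hence $\sigma_n = t^*$ for all large $n$, and the Gronwall estimate yields $u(t^*,u_n) \to u(t^*,u_0)$ in $H_0^1(\Omega)$ (in fact uniformly on $[0,t^*]$).

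The main obstacle is the continuation step: \emph{a priori} the estimate on $\|u(s,u_n) - u(s,u_0)\|$ is valid only while the approximating trajectory remains in the set $V$ where the Lipschitz constant $L$ is uniform, and the global hypothesis $T(u_n) = +\infty$ only prevents blow-up without giving any uniform-in-$n$ bound on $[0,t^*]$. The bootstrap argument above is what converts the \emph{local} Lipschitz estimate into a \emph{global-in-}$s$ conclusion on the fixed interval $[0,t^*]$.
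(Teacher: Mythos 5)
Your argument is correct and is essentially the same as the paper's, which simply cites \cite[Theorem 3.4.1]{D}: that theorem's proof is precisely the mild-solution subtraction, the singular Gronwall inequality, and a continuation argument to keep the approximating trajectories in a set where the local Lipschitz constant of $\Phi$ is uniform. You have merely written out the details that the paper delegates to the reference, and your handling of the bootstrap step is the right way to close the gap left by the purely local Lipschitz estimate.
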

\begin{proof}
	The proof follows as in \cite[Theorem 3.4.1]{D}
\end{proof}

\begin{lemma} \label{ESTINF}
	If $u_{0}\in H^{1}_0(\Omega)$, then there are $\gamma \in (0,1/2)$ and $C>0$ such that 	
	$$
	||u_{t}(.,u_0)||_{L^{2}(\Omega)} \leq C\left[\frac{1}{t^{\frac{1}{2}-\gamma}}+\frac{1}{t^{\gamma}}\right], \quad \forall t \in \mathring{J}(u_0). 
	$$  
\end{lemma}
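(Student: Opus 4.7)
The plan is to start from the mild solution formula
$$u(t) = e^{-At}u_0+\int_0^t e^{-A(t-s)}\Phi(u(s))\,ds$$
and the equation $u_t=-Au+\Phi(u)$, then rewrite
$$u_t(t) = -Ae^{-At}u_0 + e^{-At}\Phi(u(t)) - \int_0^t Ae^{-A(t-s)}\bigl[\Phi(u(s))-\Phi(u(t))\bigr]\,ds$$
via the standard Volterra trick of adding and subtracting $\Phi(u(t))$ inside the integral, so that the new integrand vanishes at $s=t$. The first two terms are straightforward: $\|Ae^{-At}u_0\|_{L^2}$ is controlled by the factorization $Ae^{-At}u_0 = A^{1/2}e^{-At}(A^{1/2}u_0)$ combined with the analytic semigroup bound $\|A^{1/2}e^{-At}\|_{\mathcal{L}(L^2)}\le Ct^{-1/2}$, using $u_0\in D(A^{1/2})=H^1_0(\Omega)$; while $\|e^{-At}\Phi(u(t))\|_{L^2}\le \|\Phi(u(t))\|_{L^2}$ is uniformly bounded on any compact subinterval of $J(u_0)$ by (\ref{L1*}) together with the continuity of $u$ into $H^1_0$.

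The main obstacle is the Volterra integral, because $\|Ae^{-A(t-s)}\|_{\mathcal{L}(L^2)}\le C(t-s)^{-1}$ is only borderline non-integrable. To handle it, I would first establish the improved regularity
$$\|A^{1-\gamma}u(t)\|_{L^2}\le C\bigl(t^{-(1/2-\gamma)}+t^{\gamma}\bigr), \qquad \gamma\in(0,1/2),$$
by applying $A^{1-\gamma}$ directly to the mild formula: the linear part is written $A^{1-\gamma}e^{-At}u_0=A^{1/2-\gamma}e^{-At}(A^{1/2}u_0)$, and the Volterra part is handled with the now-integrable kernel via $\int_0^t(t-s)^{-(1-\gamma)}\,ds=t^{\gamma}/\gamma$. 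Combined with the analytic-semigroup estimate $\|(I-e^{-Ah})v\|_{H^1_0}\le Ch^{1/2-\gamma}\|A^{1-\gamma}v\|_{L^2}$ (see \cite[Theorem 1.4.3]{D}), this yields a Hölder-type bound for $s\mapsto u(s)$ in $H^1_0$ on $[t/2,t]$, which via (\ref{L1*}) upgrades to a Hölder bound for $\Phi\circ u$ in $L^2$ on the same interval.

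One then splits the Volterra integral at $s=t/2$: on $[0,t/2]$ the kernel is dominated by $2C/t$ and the integrand trivially by $2\sup_s\|\Phi(u(s))\|_{L^2}$, producing a uniformly bounded contribution (logarithmic factor absorbed in the constant); on $[t/2,t]$ one integrates the Hölder bound against the singular kernel $(t-s)^{-1}$, which after explicit computation yields precisely the two contributions of order $t^{-(1/2-\gamma)}$ and $t^{-\gamma}$ present in the claim. Adding the three pieces gives the stated estimate on $\mathring{J}(u_0)$.

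The hardest single step is the careful bookkeeping in this last estimate, since the Hölder constant for $u$ on $[t/2,t]$ itself blows up at $s=0$ with rate $s^{-(1/2-\gamma)}$; balancing this growth against the $(t-s)^{-1}$ singularity of the Volterra kernel is what forces the specific exponents $1/2-\gamma$ and $\gamma$ to appear in the final bound. Everything else is an application of the standard machinery of sectorial operators and analytic semigroups, as developed in \cite{D}.
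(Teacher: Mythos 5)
Your decomposition of $u_t$ and the subsequent estimates are essentially the standard proof of the result that the paper invokes: the authors give no argument at all, they simply cite \cite[Theorem 3.5.2]{D} and then use the continuous embedding $X^{\gamma}\hookrightarrow L^{2}(\Omega)$. So in substance you take the same route, just unpacking the black box, and the intermediate steps you describe (the Volterra trick of subtracting $\Phi(u(t))$, the bound $\|A^{1-\gamma}u(t)\|_{L^2}\le C(t^{-(1/2-\gamma)}+t^{\gamma})$, the resulting H\"older bound for $s\mapsto u(s)$ in $H^1_0(\Omega)$ with constant blowing up like $s^{-(1/2-\gamma)}$, and the splitting of the singular integral at $s=t/2$) are all correct as far as they go.

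Two things, however, do not close. First, the exponents: for $u_0\in H^1_0(\Omega)=D(A^{1/2})$ your own bound on the linear term is $\|Ae^{-At}u_0\|_{L^2}\le Ct^{-1/2}\|u_0\|_{H^1_0}$, and $t^{-1/2}$ is \emph{not} dominated by $t^{-(1/2-\gamma)}+t^{-\gamma}$ as $t\to 0^{+}$ when $\gamma\in(0,1/2)$; moreover the other two pieces ($e^{-At}\Phi(u(t))$ and the Volterra integral) come out \emph{bounded} near $t=0$ after your own splitting, so they do not "yield precisely the two contributions of order $t^{-(1/2-\gamma)}$ and $t^{-\gamma}$" as you assert. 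What your argument actually proves is $\|u_t(t)\|_{L^2}\le C(1+t^{-1/2})$, which is the correct and sharp local rate; the stronger rate in the statement cannot be reached for general $u_0\in H^1_0(\Omega)$. (This is arguably a defect of the statement itself, and it is harmless for how the lemma is used, since only $t\ge\delta$ ever matters.) Second, and more seriously: every constant in your argument --- $\sup_s\|\Phi(u(s))\|_{L^2}$, the local Lipschitz constant $L$ from (\ref{L1*}), the H\"older constants --- depends on a bound for $\|u(s)\|_{H^1_0}$ over the time interval considered, so you obtain the estimate on each compact subinterval of $J(u_0)$ with an interval-dependent constant, not a single $C$ valid on all of $\mathring{J}(u_0)$ as the lemma asserts and as Corollary \ref{gozero} requires. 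Promoting the local estimate to a global one needs an a priori bound for the orbit in $H^1_0(\Omega)$ (in the paper this only appears later, in Proposition \ref{tth1}, and only for $u_0\in\partial D_A$), and even then restarting the estimate on $[t-1,t]$ gives boundedness of $\|u_t(t)\|_{L^2}$ for large $t$, not decay. The paper's citation-proof inherits exactly the same issues, but since you are writing the argument out, you should at minimum state that your constant is local in time and flag where a global orbit bound would be needed.
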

\begin{proof} By using \cite[Theorem 3.5.2]{D}, there are $C>0$ and $\gamma \in (0,1/2)$ such that $t \mapsto \frac{du}{dt}(t) \in X^{\gamma}$ is locally H\"older continuous on $\mathring{J}(u_0)$ and 
	$$
	||u_{t}(.,u_0)||_{X^{\gamma}} \leq C\left[\frac{1}{t^{\frac{1}{2}-\gamma}}+\frac{1}{t^{\gamma}}\right], \quad \forall t \in \mathring{J}(u_0). 
	$$  
	Since the embedding $X^{\gamma} \hookrightarrow L^{2}(\Omega)$ is continuous, we get the desired result.   
\end{proof}

As an immediate consequence of the last lemma we have the corollary below.

\begin{corollary} \label{gozero} Let $u_0 \in H_0^{1}(\Omega)$ such that $T(u_0)=+\infty$. Then, 
	$$
	\|u_t(.,u_0)\|_{L^{2}(\Omega)} \to 0 \quad \mbox{as} \quad t \to +\infty. 
	$$	
	
\end{corollary}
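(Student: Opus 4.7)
The plan is to invoke Lemma \ref{ESTINF} directly and observe that the upper bound it provides is itself infinitesimal as $t \to +\infty$. Since the hypothesis is $T(u_0) = +\infty$, we have $\mathring{J}(u_0) = (0, +\infty)$, and therefore the estimate
$$
\|u_t(\cdot, u_0)\|_{L^2(\Omega)} \leq C\left[\frac{1}{t^{\frac{1}{2}-\gamma}} + \frac{1}{t^\gamma}\right]
$$
holds for every $t > 0$ with constants $C > 0$ and $\gamma \in (0, 1/2)$ independent of $t$.

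The decisive point is the range $\gamma \in (0, 1/2)$ supplied by Lemma \ref{ESTINF}: it forces both exponents $\frac{1}{2}-\gamma$ and $\gamma$ appearing in the denominators to be strictly positive. Hence each of the two summands on the right-hand side tends to $0$ as $t \to +\infty$, and the conclusion $\|u_t(\cdot, u_0)\|_{L^2(\Omega)} \to 0$ follows immediately.

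I do not foresee any real obstacle here; the corollary is essentially a direct reading of Lemma \ref{ESTINF}, the only substantive observation being that the global existence assumption $T(u_0) = +\infty$ legitimises sending $t$ to infinity within the domain of validity of the bound. The genuine analytical work has already been carried out in the proof of Lemma \ref{ESTINF} (via the regularity estimates of Henry, \cite[Theorem 3.5.2]{D}) together with the continuous embedding $X^\gamma \hookrightarrow L^2(\Omega)$.
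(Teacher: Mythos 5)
Your proof is correct and is exactly the argument the paper intends: the corollary is stated as an immediate consequence of Lemma \ref{ESTINF}, and your observation that $T(u_0)=+\infty$ makes the bound valid for all $t>0$ while $\gamma\in(0,1/2)$ forces both exponents to be positive is precisely that deduction.
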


Before concluding this section we would like point out that the  conditions $(H),(a_0),(a_1),(f_1)-(f_2)$ and $(g)$ ensures that $\Phi'(0)=\Phi(0)=0$, then by  \cite[Theorem 5.1.1]{D},   $u=0$ is an {\it asymptotically stable equilibrium} for (\ref{eq1}), and so, for (\ref{1}).

\section{Some properties of the trajectory }

 In this section we will show some important properties of the solution $u:[0,T] \rightarrow H^{1}_0(\Omega)$ obtained in the last section. Have this in mind, we must fix some notations that will be used later on. 
 
 If $u_0 \in   H^{1}_{0}(\Omega)$ we denote by $u(t) = u(t, u_0)$ the solution (\ref{in1*}) at time $t$, $O(u_0)=\{u(t,u_0)\,:\,t \in T(u_0)\}$ and
$$
\omega(u_0)=\{u \in H^{1}_{0}(\Omega)\,:\, \exists \, t_n \to +\infty \quad \mbox{with} \quad u(t_n,u_0) \to u \quad \mbox{in} \quad H^{1}_{0}(\Omega)\}.
$$
Moreover, we denote by $D_A$ the attraction of $u=0$, i.e., 
$$
D_A = \{u_0 \in  H^{1}_{0}(\Omega); u(t, u_0) \to 0 \quad \mbox{as} \quad  t \to +\infty \}.
$$

Hereafter, we denote by $E:H_0^{1}(\Omega) \to \mathbb{R}$ the functional given by 
$$
E(u)=\int_{\Omega}\frac{1}{2}|\nabla u|^{2}\,dx-\int_{\Omega}F(u)\,dx, 
$$
where $F(s):=\int_{0}^{s}f( r)\,dr$. 

\begin{lemma} \label{der} 
	Let $u_0 \in H^{1}_0(\Omega)$ and $u$ be the solution given in (\ref{in1*}). Then, for each $T \in \mathring{J}(u_0)$ we have that  $E(u(.))\in C([0, T])\cap C^{1}((0, T))$ and 
	\begin{equation}\label{Lyp}
	\frac{d}{dt}E(u(t))=-\int_{\Omega}|u^{2}_{t}(t)|\,dx+\int_{\Omega}\Psi\left(x,u(t),\int_{\Omega}g(u(t))\,dx \right)u_t(t)\,dx, \quad \forall t \in (0,T).
	\end{equation}

\end{lemma}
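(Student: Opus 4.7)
My plan is to write $E = \frac{1}{2}\|\cdot\|^{2} - \int_{\Omega} F(\cdot)\,dx$ and treat continuity on $[0,T]$ and $C^{1}$-smoothness on $(0,T)$ separately, using the strong-solution regularity from Lemma \ref{Rem1} on the open interval. Continuity of $E(u(\cdot))$ on $[0,T]$ is immediate: by Lemma \ref{Rem1}, $u \in C([0,T], H_{0}^{1}(\Omega))$; the quadratic part of $E$ is obviously continuous on $H_{0}^{1}$, and the potential part $u \mapsto \int_{\Omega} F(u)\,dx$ is continuous there by the growth imposed on $f$, via Sobolev embedding when $N \geq 3$ and the Trudinger--Moser inequality \eqref{M} when $N = 2$.

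For $C^{1}$-smoothness on $(0,T)$, Lemma \ref{Rem1} provides $u \in C((0,T), D(A)) \cap C^{1}((0,T), X)$ together with the strong-form identity
$$u_{t}(t) + Au(t) = \hat{f}(u(t)) + \tilde{\Psi}(u(t)) \quad \text{in } X, \quad t \in (0,T).$$
Fix $t \in (0,T)$. For the Dirichlet part I exploit $u(t) \in D(A)$ and self-adjointness of $A$ to rewrite $\|u(t)\|^{2} = \langle Au(t), u(t) \rangle_{X}$, and then I split the difference quotient as
$$\frac{\langle Au(t+h),u(t+h)\rangle_{X} - \langle Au(t),u(t)\rangle_{X}}{h} = \Bigl\langle Au(t+h), \tfrac{u(t+h)-u(t)}{h} \Bigr\rangle_{X} + \Bigl\langle A\tfrac{u(t+h)-u(t)}{h}, u(t) \Bigr\rangle_{X},$$
where in the second term I have moved $A$ onto $u(t)$ by self-adjointness (noting that $(u(t+h) - u(t))/h \in D(A)$). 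Letting $h \to 0$, using continuity of $Au$ in $X$ and convergence $\frac{u(t+h) - u(t)}{h} \to u_{t}(t)$ in $X$, gives $\frac{d}{dt} \|u(t)\|^{2} = 2 \langle Au(t), u_{t}(t) \rangle_{X}$. For the potential part, $u \in C^{1}((0,T), X)$ combined with condition $(H)$ (providing the integrability needed to pass to the limit) yields $\frac{d}{dt} \int_{\Omega} F(u(t))\,dx = \int_{\Omega} f(u(t)) u_{t}(t)\,dx$ by dominated convergence. Combining and substituting the strong-form identity,
$$\frac{d}{dt} E(u(t)) = \langle Au(t) - \hat{f}(u(t)), u_{t}(t) \rangle_{X} = \langle -u_{t}(t) + \tilde{\Psi}(u(t)), u_{t}(t) \rangle_{X},$$
which is exactly \eqref{Lyp}; continuity in $t$ of the right-hand side is inherited from $u_{t}, \tilde{\Psi}(u(\cdot)) \in C((0,T), X)$.

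The main obstacle is the Dirichlet computation, because $u_{t}(t)$ is only guaranteed to lie in $X = L^{2}(\Omega)$ (Lemma \ref{ESTINF} places it in $X^{\gamma}$ with $\gamma < 1/2$, still short of $H_{0}^{1}(\Omega) = X^{1/2}$), so the naive identity $\frac{d}{dt} \frac{1}{2}\|u\|^{2} = \int_{\Omega} \nabla u \cdot \nabla u_{t}\,dx$ is not directly available. The self-adjoint reformulation $\|u\|^{2} = \langle Au, u \rangle_{X}$ sidesteps this by shifting all of the second-order regularity onto the factor $u(t)$, which indeed belongs to $D(A) = H^{2}(\Omega) \cap H_{0}^{1}(\Omega)$ on $(0,T)$ by the strong-solution property from Lemma \ref{Rem1}.
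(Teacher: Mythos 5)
Your proposal is correct and follows essentially the same route as the paper: the key step in both is to shift the second-order derivatives off the difference quotient and onto $u(t)\in D(A)=H^{2}(\Omega)\cap H_{0}^{1}(\Omega)$, which the paper does by writing $\nabla(u(t)-u(s))\cdot\nabla(u(t)+u(s))$ and integrating by parts, and which you do equivalently via the splitting $\langle Aa,a\rangle-\langle Ab,b\rangle=\langle Aa,a-b\rangle+\langle A(a-b),b\rangle$ and self-adjointness of $A$. The treatment of the potential term and of continuity on $[0,T]$ likewise matches the paper's argument.
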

\begin{proof}
The proof follows the same ideas found in Quittener and Souplet \cite[Lemma 17.5]{QS}, however for the reader's convenience we will write its proof.  Denote $E_{1}(t)=\int_{\Omega} |\nabla u(t)|^{2}\,dx$ and $G(t)=\int_{\Omega} F( u(t))\,dx$. We are going to show only the continuity of $G(t)$. Indeed, For $t,s\in (0,T)$, $s\neq t$, by using $(f_1)-(f_2)$, we obtain 
	\begin{equation}\label{Tr}
	|G(t)-G(s)|\leq C||u(t)-u(s)||_{H_{0}^{1}(\Omega)}
	\end{equation}
	where $C$ is a positive constant. 
	Therefore $E(u(.))\in C([0, T])$. From Lemma \ref{Rem1}, we have 
	\begin{equation}\label{eqy1}
	u\in C((0, T), H^{2}(\Omega) \cap H_0^{1}(\Omega)) \cap C^{1}((0,T), L^{2}(\Omega)).
	\end{equation}
	For $t\neq s$, we have 
	$$ 
	\frac{G(t)-G(s)}{t-s}=\frac{1}{t-s}\int_{\Omega} F(u(t))-F( u(s))\,dx=\int_{\Omega}\int_{0}^{1}\left(\frac{u(t)-u(s)}{t-s} \right) f( u(s)+\lambda (u(t)-u(s))\,d\lambda dx.
	$$
	Since $u \in C^{1}((0,T),L^{2}(\Omega))$, the last inequality together with the conditions $(f_1)-(f_2)$ leads to 
	$$ 
	\lim_{s \to t}\frac{G(t)-G(s)}{t-s}=\int_{\Omega} u_{t}(t)f(u(t))\,dx.
	$$
	On  the other hand,  by using integration by parts we obtain 
	\begin{eqnarray*}
		\frac{E_1(t)-E_1(s)}{t-s}&=&\frac{1}{t-s}\int_{\Omega} \nabla (u(t)-u(s)).\nabla (u(t)+u(s))\,dx \\
		&=& -\int_{\Omega} \left(\frac{u(t)-u(s)}{t-s}\right)\Delta (u(t)+u(s))\,dx\rightarrow -2\int_{\Omega} u_{t}(t)\Delta u(t)\,dx, \;\text{as}\,\,s\rightarrow t.
	\end{eqnarray*}
	Consequently $E(u(.))\in C^{1}((0, T))$ and 
	$$ 
	\frac{d}{dt} E(u(t,u_0))=\int_{\Omega} (-\Delta u-f( u))u_{t}\,dx=-\int_{\Omega}|u_{t}(t)|^{2}\,dx+\int_{\Omega}\Psi\left(x,u(t),\int_{\Omega}g(u(t))\,dx \right)u_t(t)\,dx.
	$$
\end{proof}

As a consequence of the last result, we have the following corollary.

\begin{corollary} \label{1M}
	Let $u_0 \in H^{1}_0(\Omega)$.  Then 
	$$ 
	\int_{\Omega}g(u(t))\,dx \geq K \Rightarrow V'_{u_{0}}(t)\leq 0,
	$$
	where $K$ was given in $(a_2)$ ( see also (\ref{eq1o})), $u(t)=u(t,u_0)$ and $V_{u_0}(t)=E(u(t))$.	
	
\end{corollary}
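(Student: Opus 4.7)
The proof should be a direct application of Lemma \ref{der} combined with the vanishing property \eqref{eq1o} of $\Psi$. The plan is to observe that condition $(H)$ gives $g \geq 0$, so whenever $\int_\Omega g(u(t))\,dx \geq K$, the argument $z$ in $\Psi(x, u(t), z)$ satisfies $|z| = z \geq K$, which by \eqref{eq1o} forces $\Psi(\cdot, u(t), \int_\Omega g(u(t))\,dx) \equiv 0$ pointwise in $\Omega$.

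Concretely, I would first fix $t \in \mathring{J}(u_0)$ with $\int_\Omega g(u(t))\,dx \geq K$ and apply the derivative formula \eqref{Lyp} from Lemma \ref{der}:
$$
V'_{u_0}(t) = \frac{d}{dt} E(u(t)) = -\int_\Omega |u_t(t)|^2\,dx + \int_\Omega \Psi\!\left(x, u(t), \int_\Omega g(u(t))\,dx\right) u_t(t)\,dx.
$$
Then I would invoke \eqref{eq1o} to conclude the second integrand vanishes identically on $\Omega$, so only the nonpositive dissipation term survives:
$$
V'_{u_0}(t) = -\int_\Omega |u_t(t)|^2\,dx \leq 0.
$$

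There is really no obstacle here; the only thing to be careful about is that \eqref{eq1o} requires $|z| \geq K$, not $z \geq K$, and this is why one uses $g \geq 0$ (from $(H)$) to identify $z = \int_\Omega g(u(t))\,dx$ with $|z|$. So the proof is essentially one line once Lemma \ref{der} is in hand, and the corollary serves to identify $V_{u_0}$ as a Lyapunov functional on the region of phase space where the nonlocal correction $\Psi$ turns off.
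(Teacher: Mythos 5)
Your proof is correct and follows exactly the paper's own argument: apply the derivative formula \eqref{Lyp} from Lemma \ref{der} and use \eqref{eq1o} to kill the $\Psi$ term. Your extra observation that $g \geq 0$ (from $(H)$) is needed to pass from $z \geq K$ to $|z| \geq K$ is a small but worthwhile clarification that the paper leaves implicit.
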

\begin{proof} From Lemma \ref{der},  
	$$
	V'_{u_{0}}(t)=-\int_{\Omega}|u^{2}_{t}(t)|\,dx+\int_{\Omega}\Psi\left(x,u(t),\int_{\Omega}g(u(t))\,dx \right)u_t(t)\,dt, \quad \forall t \in \mathring{J}(u_0).
	$$	 
	Hence, by (\ref{eq1o}),
	$$
	\int_{\Omega}g(u(t))\,dx \geq K \; \Rightarrow \; V'_{u_{0}}(t)=-\int_{\Omega}|u^{2}_{t}(t)|\,dx\leq 0,
	$$ 	
	proving the lemma.
\end{proof}

\begin{lemma} \label{PS}
	The functional $E$ satisfies the $(PS)$ condition. 
\end{lemma}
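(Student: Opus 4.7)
The plan is to verify (PS) by the standard three-step scheme: boundedness, weak limit, and strong convergence. Let $(u_n)\subset H_0^1(\Omega)$ be a sequence with $|E(u_n)|\le M$ and $E'(u_n)\to 0$ in $H^{-1}(\Omega)$; note that $E$ is $C^1$ since $f$ is continuous and subcritical in the appropriate sense ($N=2$: exponential subcritical via $(f_1)$; $N\ge 3$: polynomial with $p<q/2\le 2^*/2<2^*-1$ via $(f_1)$).

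First I would prove boundedness using the Ambrosetti--Rabinowitz condition $(f_3)$. Writing
\[
E(u_n)-\tfrac{1}{2+\gamma}\langle E'(u_n),u_n\rangle=\Bigl(\tfrac{1}{2}-\tfrac{1}{2+\gamma}\Bigr)\|u_n\|^2+\int_\Omega\!\Bigl(\tfrac{1}{2+\gamma}f(u_n)u_n-F(u_n)\Bigr)dx,
\]
$(f_3)$ forces the integrand to be nonnegative, while the left-hand side is bounded by $M+o(1)\|u_n\|$. This yields $\|u_n\|\le C$. Extracting a subsequence, $u_n\rightharpoonup u$ in $H_0^1(\Omega)$, and by the compact Sobolev embeddings, $u_n\to u$ a.e.\ and strongly in $L^r(\Omega)$ for every $r\in[1,\infty)$ when $N=2$ and for every $r\in[1,2^*)$ when $N\ge 3$.

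Next I would show $\int_\Omega f(u_n)(u_n-u)\,dx\to 0$. For $N\ge 3$, the growth bound $|f(t)|\le C_2(1+|t|^p)$ with $p<q/2\le 2^*/2$ gives $|f(u_n)|$ bounded in $L^{p'}$ with $p'$ such that $pp'<2^*$, and Hölder with $u_n-u\to 0$ strongly in $L^{pp'}$ closes the step. For $N=2$, I would use $(f_1)$ to write $|f(u_n)|\le c_\beta e^{\beta u_n^2}$ for any $\beta>0$; choosing $\beta$ small enough relative to the bound $\|u_n\|\le C$ and applying the Trudinger--Moser inequality (\ref{T}) yields an $L^r$ bound on $f(u_n)$ for some $r>1$, and the pointwise convergence plus Vitali's convergence theorem (uniform integrability from the TM bound) gives $f(u_n)(u_n-u)\to 0$ in $L^1(\Omega)$.

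Finally, combining
\[
\|u_n-u\|^2=\langle E'(u_n)-E'(u),u_n-u\rangle+\int_\Omega\bigl(f(u_n)-f(u)\bigr)(u_n-u)\,dx,
\]
both terms on the right go to zero: $\langle E'(u_n),u_n-u\rangle\to 0$ since $E'(u_n)\to 0$ in $H^{-1}$ and $(u_n-u)$ is bounded, $\langle E'(u),u_n-u\rangle\to 0$ by weak convergence, and the integral term was just handled (applied also with $u_n$ replaced by $u$, which is trivial since $f(u)\in L^{r'}$). Hence $u_n\to u$ strongly in $H_0^1(\Omega)$, which is the (PS) condition. I expect the main obstacle to be the two-dimensional case: selecting $\beta$ small enough so that $e^{\beta u_n^2}$ lies in a uniformly bounded set via (\ref{T}), and then extracting uniform integrability of $f(u_n)(u_n-u)$ so that Vitali applies cleanly.
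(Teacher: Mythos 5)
Your proof is correct and follows essentially the same route as the paper: boundedness from the Ambrosetti--Rabinowitz condition via $E(u_n)-\tfrac{1}{2+\gamma}E'(u_n)u_n$, then strong convergence by passing to the limit in the nonlinear term using compact Sobolev embeddings (and Trudinger--Moser when $N=2$) together with $E'(u_n)(u_n-u)\to 0$. You merely supply more detail than the paper on why $\int_\Omega f(u_n)(u_n-u)\,dx\to 0$; the only quibble is that in the $N\ge 3$ case the Hölder partner of $L^{p'}$ should be the conjugate exponent $(p')'$ rather than $L^{pp'}$, which is harmless since $p<2^*/2$ lets you take, e.g., $p'=2$.
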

\begin{proof} Let $\{u_n\} \subset H_0^{1}(\Omega)$ be a $(PS)_d$ sequence for $E$, that is, 
	$$
	E(u_n) \to d \quad \mbox{and} \quad E'(u_n) \to 0.
	$$	
	By $(f_3)$, there is $C>0$ such that
	\begin{equation} \label{est1}
	E(u_n)-\frac{1}{2+\gamma}E'(u_n)u_n \geq \left(\frac{1}{2}-\frac{1}{2+\gamma}\right)\|u_n\|^{2}-C, \quad \forall n \in \mathbb{N}.
	\end{equation}
	On the other hand, as $\{u_n\}$ is a $(PS)_d$ sequence, there is $n_0 \in \mathbb{N}$ such that
	\begin{equation} \label{est2}
	E(u_n)-\frac{1}{2+\gamma}E'(u_n)u_n \leq d +1 +\|u_n\|, \quad \forall n \geq n_0.
	\end{equation}
	From (\ref{est1})-(\ref{est2}), $\{u_n\}$ is bounded in $H_0^{1}(\Omega)$, and so, for some subsequence, still denoted by itself, there is $u \in H_0^{1}(\Omega)$ such that 
	$$
	u_n \rightharpoonup u \quad \mbox{in} \quad H_0^{1}(\Omega),
	$$
	$$
	u_n \to u \quad \mbox{in} \quad L^{p}(\Omega), \quad \forall p \in [1,+\infty) \quad \mbox{if} \quad N=2 \quad \mbox{and}  \quad \forall p \in [1,2^*) \quad \mbox{if} \quad N \geq 3,
	$$
	and
	$$
	u_n(x) \to u(x) \quad \mbox{a.e. in} \quad \Omega.
	$$
	The above limits combined with $(f_1)-(f_2)$ ensure that 
	$$
	\int_{\Omega}f(u_n)u_n\,dx \to \int_{\Omega}f(u)u\,dx
	$$	
	and
	$$
	\int_{\Omega}f(u_n)u\,dx \to \int_{\Omega}f(u)u\,dx.
	$$	
	Recalling that $E'(u_n)u_n=E'(u_n)u=o_n(1)$, we derive that 
	$$
	\|u_n-u\|^{2}=\int_{\Omega}f(u_n)u_n\,dx-\int_{\Omega}f(u)u\,dx+o_n(1),
	$$	
then
	$$
	\|u_n-u\|^{2}=o_n(1),
	$$	
	implying that $u_n \to u$ in $H_0^{1}(\Omega)$, finishing the proof. 
\end{proof}

\begin{lemma}\label{lm1}
	Let $ u_{0}	\in \partial D_{A}$. Then the function $V_{u_0} : J(u_{0})\rightarrow \mathbb{R}$ given in Corollary \ref{1M} is bounded.
\end{lemma}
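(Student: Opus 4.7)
The goal is to prove $V_{u_0}$ is bounded both above and below on $J(u_0)=[0,T(u_0))$.

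As a preliminary observation, the perturbation $\Psi$ is bounded in $L^2$: $\|\Psi(\cdot,u(t),\int_\Omega g(u(t))\,dx)\|_{L^2(\Omega)}\leq M$ uniformly in $t\in J(u_0)$, for some $M>0$. Indeed, $\Psi$ vanishes when $|\int g(u)|\geq K$ by \eqref{eq1o}, while on the complementary set condition $(H)$ gives $\int_\Omega|f(u)|^2\,dx\leq C(K+|\Omega|)$, and $(a_1)$ yields $|1/a(x,z)-1|\leq 1+1/a_0$.

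For the lower bound, I test the equation $u_t-\Delta u=f(u)+\Psi$ against $u(t)$, integrate by parts in the $-\Delta u$ term, and invoke $(f_3)$ in the form $\int f(u)u\geq(2+\gamma)\int F(u)$. This gives
\[
(2+\gamma)V_{u_0}(t)\geq \tfrac{\gamma}{2}\|u(t)\|^2-\bigl(\|\Psi(t)\|_{L^2}+\|u_t(t)\|_{L^2}\bigr)\|u(t)\|_{L^2}.
\]
Poincar\'e and Young's inequalities then yield
\[
V_{u_0}(t)\geq -C_1-C_2\bigl(\|\Psi(t)\|_{L^2}^2+\|u_t(t)\|_{L^2}^2\bigr).
\]
Since $\|\Psi\|_{L^2}\leq M$ and, by Lemma~\ref{ESTINF}, $\|u_t(t)\|_{L^2}$ is bounded on $[\varepsilon, T(u_0))$ for each fixed $\varepsilon>0$, we get $V_{u_0}\geq -C$ on $[\varepsilon,T(u_0))$; the lower bound on $[0,\varepsilon]$ follows from continuity of $V_{u_0}$ (Lemma~\ref{der}).

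For the upper bound, Lemma~\ref{der} and Young's inequality give
\[
V_{u_0}'(t)=-\|u_t\|_{L^2}^2+\int_\Omega\Psi u_t\,dx\leq\tfrac{1}{2}\|\Psi\|_{L^2}^2\leq\tfrac{M^2}{2},
\]
so $V_{u_0}(t)\leq V_{u_0}(0)+\tfrac{M^2}{2}t$. This resolves the case $T(u_0)<\infty$ immediately. For $T(u_0)=\infty$, I would argue by contradiction: if $V_{u_0}(t_n)\to+\infty$ for some $t_n\to\infty$, Corollary~\ref{gozero} forces $\|u_t(t_n)\|_{L^2}\to 0$, whence the equation yields $\|E'(u(t_n))\|_{H^{-1}}\leq C(\|u_t(t_n)\|+\|\Psi(t_n)\|)$ bounded. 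Coupling this with the Ambrosetti--Rabinowitz estimate used in Lemma~\ref{PS} gives $\|u(t_n)\|^2\lesssim V_{u_0}(t_n)$, so $\{u(t_n)\}$ behaves as an almost-$(PS)$ sequence with diverging energy. The escape is ruled out by exploiting $u_0\in\partial D_A$: choose $u_n^*\in D_A$ with $u_n^*\to u_0$; by Lemma~\ref{CD}, $V_{u_n^*}(t)\to V_{u_0}(t)$ for every $t\in J(u_0)$, and $V_{u_n^*}(t)\to 0$ as $t\to\infty$ since $u(\cdot,u_n^*)\to 0$, so each $V_{u_n^*}$ is bounded on $[0,\infty)$. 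A limiting/diagonal argument combined with $(PS)$ then precludes $V_{u_0}(t_n)\to+\infty$.

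\textit{Main obstacle.} The hard step is the case $T(u_0)=\infty$: the naive estimate $V_{u_0}'\leq M^2/2$ permits only linear growth in $t$, and ruling out $V_{u_0}\to+\infty$ requires the full interaction of the $(PS)$ condition (Lemma~\ref{PS}), the decay $\|u_t\|_{L^2}\to 0$ (Corollary~\ref{gozero}), continuous dependence (Lemma~\ref{CD}), and the asymptotic stability of $u=0$ along $\partial D_A$-trajectories.
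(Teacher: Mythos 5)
Your lower bound and your treatment of the case $T(u_0)<\infty$ are correct, and the lower bound is in fact obtained by a different and more direct route than the paper's: testing the equation with $u(t)$ and using $(f_3)$ to get $(2+\gamma)V_{u_0}(t)\geq\tfrac{\gamma}{2}\|u(t)\|^{2}-\bigl(\|\Psi\|_{L^{2}}+\|u_t\|_{L^{2}}\bigr)\|u(t)\|_{L^{2}}$, then Poincar\'e and Young, gives $V_{u_0}\geq -C$ on $[\varepsilon,T(u_0))$ without ever invoking $u_0\in\partial D_A$ (the uniform bound on $\|\Psi\|_{L^{2}}$ is exactly Claim \ref{ESTIMATIVAPSI} of the paper, and $\|u_t\|_{L^2}$ is controlled by Lemma \ref{ESTINF}). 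The paper instead gets the lower bound from the same crossing-time device it uses for the upper bound, which does require approximating $u_0$ by $u_n\in D_A$ with $u(t,u_n)\to 0$; your coercivity argument is cleaner there.

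The genuine gap is the upper bound when $T(u_0)=+\infty$, which you flag as the main obstacle but do not close, and the sketch you give cannot work as stated. Along $t_n\to\infty$ you only obtain $\|E'(u(t_n))\|\leq \|u_t(t_n)\|_{L^{2}}+\|\Psi(t_n)\|_{L^{2}}$, which is \emph{bounded} but does not tend to zero (the $\Psi$-term contributes a fixed constant), and the hypothetical energies $V_{u_0}(t_n)\to+\infty$ do not converge to a finite level $d$; hence $\{u(t_n)\}$ is not a $(PS)$ sequence and Lemma \ref{PS} gives no contradiction, nor does any ``diagonal argument'' you gesture at. The missing idea is structural rather than compactness-based: by condition $(H)$ and \eqref{NEWEQ1}, if $t\geq\delta$ and $\int_{\Omega}g(u(t))\,dx\leq K$ then $\int_{\Omega}f(u(t))u(t)\,dx\leq C$ and, testing the equation with $u(t)$, $\|u(t)\|\leq C_{*}$. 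Contrapositively, $\|u(t)\|>C_{*}$ forces $\int_{\Omega}g(u(t))\,dx>K$, so $\Psi$ vanishes by \eqref{eq1o} and $V_{u_0}'(t)=-\|u_t(t)\|^{2}_{L^{2}}\leq 0$ by Corollary \ref{1M}. Setting $M=\sup\{|E(u)|:\|u\|\leq C_{*}\}$, a first-crossing argument (take $s_n$ the last time before $t_n$ with $V=M$; on $(s_n,t_n]$ one has $V>M$, hence $\|u\|>C_{*}$, hence $V'\leq 0$, so $V(t_n)\leq M$) shows $V$ can never exceed $M$. This barrier mechanism --- large norm switches off the nonlocal perturbation, which makes $V$ nonincreasing --- is what your proposal lacks, and without it your estimate $V_{u_0}'\leq M^{2}/2$ only yields linear growth and no bound.
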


\begin{proof}
First of all, recall that 
\begin{equation} \label{WR1}
\|u(t)\|^{2}=\int_{\Omega}f(u(t))u(t)\,dx+\int_{\Omega}\Psi\left(x,u(t),\int_{\Omega}g(u(t))\,dx \right)u(t)\,dx-\int_{\Omega}u(t)u_t(t)\,dx.
\end{equation}
If $t \geq \delta$, by Lemma \ref{ESTINF}, we have that 
\begin{equation} \label{WR2}
\|u_t(t)\|_{L^{2}(\Omega)} \leq C,
\end{equation}
for some $C>0$. Moreover, if 
$$
\int_{\Omega}g(u(t))\,dx \leq K
$$
by $(H)$
\begin{equation} \label{WR2}
\int_{\Omega}f(u(t))u(t)\,dx \leq C.
\end{equation} 
Then, from (\ref{NEWEQ1})
$$
\int_{\Omega}|u(t)|^{2+\gamma}\,dx \leq C.
$$
Since $\Omega$ is a bounded domain and $\gamma>0$, the last inequality implies that there is $C>0$ such that
\begin{equation} \label{WR3}
\int_{\Omega}|u(t)|\,dx \leq C \quad \mbox{and} \quad \int_{\Omega}|u(t)|^{2}\,dx \leq C. 
\end{equation}
Thus, 
\begin{equation} \label{IMPEST}
t \geq \delta \quad \mbox{and} \quad \int_{\Omega}g(u(t))\,dx \leq K \Rightarrow  \|u(t)\| \leq C_*,
\end{equation}
for some constant $C_*>0$. On the other hand, since $u \in C([0,\delta],H^{1}_0(\Omega))$, increasing $C_*$ if necessary, we derive that
\begin{equation} \label{ZRR1}
\|u(t)\| \leq C_*/2, \quad \forall t \in [0, \delta].
\end{equation}

Let us also assume that $ \|u_{0}\|\leq C_*/2$. Thanks to condition $(g)$, we have that  $ E : H_{0}^{1}(\Omega)\rightarrow \mathbb{R}$ is bounded on bounded sets, then  there is $M>0$ such that $|E(u)| < M$ for $\|u\|\leq C_*$. Since $u_{0}\in \partial D_{A}$, we may choose $u_{n} \in D_{A}$, $u_{n}\rightarrow u_{0}$ in $H_0^{1}(\Omega)$. Thus, there is $n_0 \in \mathbb{N}$ such that  $u_{n}\in B_{C_*}(0)$ and $u(t, u_{n})\rightarrow 0$  as $t\rightarrow +\infty$ for all $n \geq n_0$. 
\begin{claim}
	\begin{equation}\label{lm}
	|V_{u_{n}}(t)| \leq M, \quad \forall t \in \mathbb{R}^+ \quad \mbox{and} \quad \forall n \geq n_0.
	\end{equation}
\end{claim}
Indeed, if the above claim is not true, there are some $n \geq n_0$ and $t_n >0$ such that $|V_{u_{n}}(t_{n})|> M$.  Note we can assume that $t_n >\delta$, otherwise we must have
$t_n \in [0, \delta]$ for some subsequence, and so,  
$$
u(t_n,u_n) \to u(t_0,u_0) \quad \mbox{in} \quad H^{1}_0(\Omega)
$$
for some $t_0 \in [0,\delta]$. Hence, by (\ref{ZRR1}), 
$$
\|u(t_n,u_n)\| \leq C_*,
$$
for $n$ large,  then $|V_{u_{n}}(t_{n})|= |E(u(t_n,u_n))| < M$, which is absurd.

If $V_{u_{n}}(t_{n}) >  M$, we set
$$
s_n=\min\{t>0\,:\,V_{u_{n}}(s) >  M, \quad \forall s \in (t,t_n] \}.
$$
Since $u(t, u_{n})\rightarrow u_0$  as $t\rightarrow 0$, $s_n$ is well defined and by continuity of $V_{u_n}$, we deduce that
$$
V_{u_{n}}(s_{n})=M. 
$$ 
Arguing as above, we can also assume that $s_n >\delta$, then 
$$
s_n >\delta \quad \mbox{and} \quad 	\int_{\Omega}g(u(t))\,dx > K, \quad \forall t \in [s_n,t_n],
$$
and so, $V_{u_{n}}'(t)\leq 0$ for all $t\in [s_n, t_{n}]$. Therefore,   
$$
V_{u_{n}}(t_n) \leq  V_{u_{n}}(s_{n})=M,
$$ 
which is a contradiction. If $V_{u_{n}}(t_{n})<- M$, we set  
$$
d_{n}:= \max\{t> 0;\, V_{u_{n}}(s)< -M, \; \forall s\in [t_n, t) \}, 
$$
which is well defined because $u(t, u_{n})\rightarrow 0$  as $t\rightarrow +\infty$. By continuity of $V_{u_n}$ we have that $V(d_{n})=-M$. Thereby, 
$$
t_n > \delta \quad \mbox{and} \quad	\int_{\Omega}g(u(t))\,dx > K, \quad \forall t \in [t_n,d_{n}],
$$
and so, 
$$
V_{u_{n}}'(t)\leq 0 \quad \forall t\in [t_n,d_{n}].
$$
From this, 
$$
-M=V_{u_{n}}(d_{n}) \leq V_{u_{n}}(t_{n}),
$$ 
obtain again a new contradiction. This proves the Claim \ref{lm}. Thanks to Lemma \ref{CD}, we easily  get $|V_{u_{0}}(t)|\leq M$ for any $t\in J(u_{0}),$ finishing the proof of the lemma.  

\end{proof}

Now  we are going to show a strong version of Lemma \ref{1M}.
\begin{lemma}\label{stl}
Let $ u_{0}	\in \partial D_{A}$. Then there are $\delta>0$ and $K_* >K$ such that 
$$
\;||u(t)|| \geq K_* \; \Rightarrow \; V'_{u_{0}}(t)< -\delta, 
$$
where $K$ was given in $(a_2)$.

\end{lemma}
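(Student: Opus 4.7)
The plan is to argue by contradiction, leveraging three ingredients already established: Lemma \ref{lm1}, which gives a uniform energy bound $|V_{u_{0}}(t)|\le M$ on $J(u_{0})$; estimate (\ref{IMPEST}) from the proof of Lemma \ref{lm1}, which for $t\ge \delta_{0}$ and $\|u(t)\|>C_{*}$ forces $\int_{\Omega}g(u(t))\,dx>K$, whence $\Psi\equiv 0$ by (\ref{eq1o}); and the Palais--Smale condition for $E$ (Lemma \ref{PS}), combined with an a priori bound on critical points coming from $(f_{3})$. The key preliminary observation is that $(f_{3})$ yields, for any critical point $u^{*}$ of $E$, the inequality
$$
E(u^{*})=E(u^{*})-\tfrac{1}{2+\gamma}E'(u^{*})u^{*}\ge \tfrac{\gamma}{2(2+\gamma)}\|u^{*}\|^{2},
$$
so every critical point with energy $\le M$ satisfies $\|u^{*}\|\le R_{M}:=\sqrt{2(2+\gamma)M/\gamma}$.

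With this in hand, fix $K_{*}>\max\{K,C_{*},R_{M}\}$ and shrink $\delta_{0}$ (from the proof of Lemma \ref{lm1}) if necessary so that $\delta_{0}<T(u_{0})$. Suppose the conclusion fails at this $K_{*}$: then for each $n$ there exists $t_{n}\in J(u_{0})$ with $\|u(t_{n})\|\ge K_{*}$ and $V'_{u_{0}}(t_{n})\ge -1/n$. Because $u\in C([0,\delta_{0}],H_{0}^{1}(\Omega))$ is bounded on $[0,\delta_{0}]$, we may pass to $n$ large enough that $t_{n}>\delta_{0}$. Then $\|u(t_{n})\|\ge K_{*}>C_{*}$ combined with (\ref{IMPEST}) gives $\int_{\Omega}g(u(t_{n}))\,dx>K$, hence by (\ref{eq1o}) the function $x\mapsto \Psi(x,u(t_{n}),\int_{\Omega}g(u(t_{n}))\,dx)$ vanishes identically. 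Lemma \ref{der} then reduces the derivative to $V'_{u_{0}}(t_{n})=-\|u_{t}(t_{n})\|_{L^{2}}^{2}$, and together with $V'_{u_{0}}(t_{n})\ge -1/n$ this forces $\|u_{t}(t_{n})\|_{L^{2}}\to 0$.

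Next I would use the strong form of the equation (Lemma \ref{Rem1}) at time $t_{n}$: since $\Psi$ has just been shown to vanish, we get $-\Delta u(t_{n})-f(u(t_{n}))=-u_{t}(t_{n})$ in $L^{2}(\Omega)$, whence $\|E'(u(t_{n}))\|_{H^{-1}}\le C\|u_{t}(t_{n})\|_{L^{2}}\to 0$. Combined with the energy bound $|E(u(t_{n}))|=|V_{u_{0}}(t_{n})|\le M$ supplied by Lemma \ref{lm1}, the sequence $\{u(t_{n})\}$ is a Palais--Smale sequence for $E$. By Lemma \ref{PS}, along a subsequence $u(t_{n})\to u^{*}$ strongly in $H_{0}^{1}(\Omega)$, so $u^{*}$ is a critical point of $E$ with $E(u^{*})\le M$ and $\|u^{*}\|\ge K_{*}>R_{M}$, contradicting the a priori bound and completing the proof. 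The main obstacle I anticipate is merely bookkeeping: verifying that $K_{*}$ can simultaneously beat $K$, $C_{*}$, and $R_{M}$, and that the dissipation identity in Lemma \ref{der} together with the $(PS)$ machinery really do force convergence to a critical point despite the non-variational origin of the flow.
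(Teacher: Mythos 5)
Your proof is correct and follows essentially the same route as the paper's: negate the statement, use (\ref{IMPEST}) and (\ref{eq1o}) to make $\Psi$ vanish so that $V'_{u_0}(t_n)=-\|u_t(t_n)\|_{L^{2}(\Omega)}^{2}$ controls $\|E'(u(t_n))\|$, invoke Lemma \ref{lm1} for the energy bound, and contradict the $(PS)$ condition. The only (harmless) difference is the endgame: the paper takes $K_*=n\to\infty$ and contradicts the boundedness of $(PS)$ sequences, whereas you fix $K_*$ beyond an a priori bound $R_M$ on critical points with energy at most $M$ --- both rest on the same $(f_3)$-coercivity estimate already used in Lemma \ref{PS}.
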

\begin{proof} Assume by contradiction that the lemma does not hold. Then, for each $n \geq K$ must exist $t_n>0$ such that
$$
\|u(t_n,u_0)\|\geq n \quad \mbox{and} \quad V'_{u_0}(t_n)>-1/n, \quad \forall n \in \mathbb{N}.
$$
The first inequality implies that $t_n \to +\infty$, then by (\ref{IMPEST}),
$$
t_n \geq \delta \quad \mbox{and} \quad \int_{\Omega}g(u(t_n,u_0))\,dx > K, 
$$
for $n$ large enough. Since 
$$
E'(u)v=\int_{\Omega}\nabla u\nabla v\,dx-\int_{\Omega}f(u)v\,dx=-\int_{\Omega}\left(u_{t}+\Psi\left(x,u(t),\int_{\Omega}g(u(t))\,dx \right)\right)v\,dx,
$$
by (\ref{eq1o}), 
$$
\|E'(u_n)\|^{2} \leq \int_{\Omega}|u_{n,t}|^{2}\,dx,
$$
for $n$ large enough, where $u_n=u(t_n,u_0)$. On the other hand, we also have
$$
-\frac{1}{n}<V_{v_n}'(t_n)=-\int_{\Omega}|u_{n,t}|^{2}\,dx\leq -\int_{\Omega}|u_{n,t}|^{2}\,dx+o_n(1)\leq -\|E'(u_n)\|^{2} ,
$$
leading to
$$
\|E'(u_n)\|^{2}\leq \frac{1}{n}.
$$
By Lemma \ref{lm1} the sequence  $\{E(u_n)\}$ is bounded, and so,  $\{u_n\}$ is an unbounded $(PS)$ sequence for $E$, which contradicts Lemma \ref{PS}.  
\end{proof}	

\begin{lemma} \label{TINF} Let $u_0 \in \partial D_A $. Then,  $T(u_0)=+\infty$.
	
\end{lemma}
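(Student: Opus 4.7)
The plan is to argue by contradiction using the blow-up alternative for the semilinear parabolic evolution in Henry's framework: if $T(u_0)<+\infty$, then $\limsup_{t\to T(u_0)^-}\|u(t,u_0)\|=+\infty$. I will rule this out by showing that $\|u(t,u_0)\|$ stays uniformly bounded on every subinterval $[\tau,T(u_0))$ with $\tau>0$.

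Fix $\tau\in(0,T(u_0))$. By Lemma \ref{ESTINF} there exists $C_\tau>0$ with $\|u_t(t,u_0)\|_{L^{2}(\Omega)}\le C_\tau$ for all $t\in[\tau,T(u_0))$. Since Lemma \ref{Rem1} guarantees that $u$ is a strong solution on $(0,T(u_0))$, I can take the $L^{2}$-inner product of the equation $u_t-\Delta u=f(u)+\tilde{\Psi}(u)$ with $u(t)$ and integrate by parts to obtain
\begin{equation*}
\|u(t)\|^{2}=\int_\Omega f(u)u\,dx+\int_\Omega \Psi\!\left(x,u,\int_\Omega g(u)\,dx\right)u\,dx-\int_\Omega u\,u_t\,dx.
\end{equation*}
The last term is controlled by $\|u(t)\|_{L^{2}}\|u_t(t)\|_{L^{2}}\le C\|u(t)\|$ via Poincar\'e. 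For the $\Psi$-term, I split into two regimes: if $\int_\Omega g(u(t))\,dx\ge K$ then $\Psi\equiv 0$ by (\ref{eq1o}); otherwise condition $(H)$ gives $\int_\Omega |f(u)u|\,dx\le C(K+|\Omega|)$, and $(a_1)$ together with boundedness of $a$ ensures that $|1/a(\cdot,z)-1|$ is uniformly bounded, whence $\bigl|\int_\Omega \Psi u\,dx\bigr|$ is uniformly bounded in either case.

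The Ambrosetti--Rabinowitz inequality from $(f_3)$, $F(s)\le \frac{1}{2+\gamma}f(s)s$, combined with the identity above yields
\begin{equation*}
E(u(t))\ge \frac{1}{2}\|u(t)\|^{2}-\frac{1}{2+\gamma}\int_\Omega f(u)u\,dx\ge \frac{\gamma}{2(2+\gamma)}\|u(t)\|^{2}-C_{1}\bigl(1+\|u(t)\|\bigr).
\end{equation*}
Because $V_{u_0}(t)=E(u(t))$ is bounded on $J(u_0)$ by Lemma \ref{lm1}, this coercive lower bound forces $\|u(t)\|$ to remain uniformly bounded on $[\tau,T(u_0))$, contradicting the blow-up alternative; hence $T(u_0)=+\infty$. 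The main subtlety is ensuring the control on $\int_\Omega \Psi u\,dx$ is uniform in $t$, which is precisely why the dichotomy from $(a_2)$ combined with the $(H)$-estimate on $\int_\Omega|f(u)u|\,dx$ in the low-mass regime is essential; without $(H)$, no a priori bound on this cross term would be available.
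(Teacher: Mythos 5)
Your argument is correct (to the same standard as the paper's own lemmas) but takes a genuinely different route. The paper's proof first establishes the uniform bound $\int_\Omega|\Psi(x,u(t),\int_\Omega g(u(t))\,dx)|^{2}\,dx\le C$ on $J(u_0)$ --- via the same dichotomy on $\int_\Omega g(u(t))\,dx$ that you use --- and then invokes Quittner's Theorem 1 and Remark 2 to obtain only exponential-in-time bounds $\int_\Omega|u(t)|^{2}\,dx\le c_1e^{c_2t}$ and $\int_\Omega(f(u(t))u(t)-2F(u(t)))\,dx\le c_1e^{c_2t}$, which it converts through $(f_3)$ and the boundedness of $V_{u_0}$ (Lemma \ref{lm1}) into $\|u(t)\|^{2}\le c_4+c_3e^{c_2t}$; finite-time blow-up is then impossible. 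You instead combine the identity (\ref{WR1}), the Ambrosetti--Rabinowitz inequality $F(s)\le\frac{1}{2+\gamma}f(s)s$, the uniform bound on $\int_\Omega\Psi u\,dx$, and the $\|u_t\|_{L^{2}}$ estimate of Lemma \ref{ESTINF} to produce the coercive lower bound $E(u(t))\ge\frac{\gamma}{2(2+\gamma)}\|u(t)\|^{2}-C(1+\|u(t)\|)$, which together with Lemma \ref{lm1} gives a \emph{uniform} bound for $\|u(t)\|$ on $[\tau,T(u_0))$ --- strictly more than the paper extracts at this stage, and essentially Proposition \ref{tth1} for free. What your approach buys is economy (no appeal to Quittner's Gronwall-type result); what the paper's approach buys is that it only needs the $L^{2}$ bound on $\Psi$ itself, not on $\Psi u$, and tolerates exponentially growing trajectories. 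The one caveat: you use Lemma \ref{ESTINF} with a constant uniform up to the possibly finite maximal time, whereas in Henry's Theorem 3.5.2 that constant normally depends on a bound for the orbit, so there is a latent circularity; however, the paper reads Lemma \ref{ESTINF} in exactly the same way inside the proof of Lemma \ref{lm1} (the estimate $\|u_t(t)\|_{L^{2}(\Omega)}\le C$ for $t\ge\delta$), on which its own proof of the present lemma also depends, so you have not introduced any gap beyond those already present in the paper's chain of lemmas.
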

\begin{proof} To begin with, we make the following claim:
\begin{claim} \label{ESTIMATIVAPSI} There is $C>0$ such that
$$
\int_{\Omega}\left|\Psi\left(x,u(t),\int_{\Omega}g(u(t))\,dx \right)\right|^{2}\,dx \leq C, \quad \forall t \in J(u_0).
$$	
\end{claim}	
\noindent Indeed, if $\int_{\Omega}g(u(t))\,dx >K$, by (\ref{eq1o}) we have that
$$
\Psi\left(x,u(t),\int_{\Omega}g(u(t))\,dx \right)=0.
$$ 
On the other hand, if $\int_{\Omega}g(u(t))\,dx \leq K$, $(H)$ and $(a_1)-(a_2)$ combine to give  
$$
\int_{\Omega}\left|\Psi\left(x,u(t),\int_{\Omega}g(u(t))\,dx \right)\right|^{2} \leq C,
$$ 
for some constant $C$ that does not depend on $t$. This proves the claim. 

Since the Claim \ref{ESTIMATIVAPSI} is true, we can argue as in \cite[Theorem 1 and Remark 2]{Q} to obtain  
	\begin{equation*} \label{B1}
	\int_{\Omega}|u(t)|^{2}\,dx \leq c_1e^{c_2t}, \quad \forall t \in J(u_0)
	\end{equation*}	
	and
	$$
	\int_{\Omega}(f(u(t))u(t)-2F(u(t)))\,dx \leq c_1e^{c_2t}, \quad \forall t \in J(u_0),
	$$
	for some positive constants $c_1,c_2>0$. Then, by $(f_3)$, 
	$$
	\int_{\Omega}F(u(t))\,dx \leq c_3e^{c_2t}, \quad \forall t \in J(u_0).
	$$
	This together with the boundedness of $V_{u_0}$ leads to 
	\begin{equation*} \label{B2}
	\|u(t)\|^{2}\leq c_4+c_3e^{c_2t}, \quad \forall t \in J(u_0),
	\end{equation*} 
	which completes the proof.
\end{proof}

\begin{proposition}\label{tth1}
Let $u_{0}\in \partial D_{A}$. Then the orbit $O (u_{0})$  is bounded in $H_{0}^{1}(\Omega)$.
\end{proposition}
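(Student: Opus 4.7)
My plan is to argue by contradiction via a Palais--Smale argument based on Lemma~\ref{PS}. Assume $O(u_0)$ is unbounded in $H_0^1(\Omega)$. Since Lemma~\ref{TINF} gives $T(u_0)=+\infty$ and $u(\cdot,u_0)\in C([0,+\infty),H_0^1(\Omega))$ is bounded on every compact subinterval of $[0,+\infty)$, a sequence $t_n\to+\infty$ must exist with $\|u(t_n,u_0)\|\to+\infty$. The goal is to show that $(u(t_n,u_0))$ is a Palais--Smale sequence for $E$; together with Lemma~\ref{PS} this yields a subsequential $H_0^1$-limit and contradicts $\|u(t_n,u_0)\|\to+\infty$.

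The first key step is to force the nonvariational term $\Psi$ to vanish eventually along the sequence. By Lemma~\ref{lm1}, $V_{u_0}(t_n)=E(u(t_n))$ is bounded in $\mathbb{R}$, so
$$
\int_\Omega F(u(t_n))\,dx=\frac{1}{2}\|u(t_n)\|^2-V_{u_0}(t_n)\longrightarrow +\infty.
$$
The Ambrosetti--Rabinowitz condition $(f_3)$ then forces $\int_\Omega f(u(t_n))u(t_n)\,dx\to+\infty$, and condition $(H)$ upgrades this to $\int_\Omega g(u(t_n))\,dx\to+\infty$. Hence for $n$ large enough $\int_\Omega g(u(t_n))\,dx\geq K$, and by \eqref{eq1o} the term $\Psi\bigl(\cdot,u(t_n),\int_\Omega g(u(t_n))\,dx\bigr)$ is identically zero.

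For such $n$, the parabolic equation \eqref{1} reduces to $u_t(t_n)=\Delta u(t_n)+f(u(t_n))$, so for every $v\in H_0^1(\Omega)$,
$$
E'(u(t_n))v=\int_\Omega\nabla u(t_n)\nabla v\,dx-\int_\Omega f(u(t_n))v\,dx=-\int_\Omega u_t(t_n)\,v\,dx.
$$
By H\"older and Poincar\'e we obtain $|E'(u(t_n))v|\leq C\|u_t(t_n,u_0)\|_{L^2(\Omega)}\|v\|$, and Corollary~\ref{gozero} ensures $\|u_t(t_n,u_0)\|_{L^2(\Omega)}\to 0$. Passing to a subsequence along which $V_{u_0}(t_n)\to d\in\mathbb{R}$, the sequence $(u(t_n,u_0))$ is a $(PS)_d$ sequence for $E$; Lemma~\ref{PS} then provides an $H_0^1(\Omega)$-convergent subsequence, contradicting $\|u(t_n,u_0)\|\to+\infty$.

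The main obstacle is precisely the implication $\|u(t_n)\|\to+\infty\Longrightarrow \int_\Omega g(u(t_n))\,dx\to+\infty$; without it the nonvariational perturbation $\Psi$ cannot be eliminated, and one does not obtain a clean Palais--Smale sequence for $E$. This step rests on the interplay between condition $(H)$, the Ambrosetti--Rabinowitz condition $(f_3)$, and the boundedness of the energy along the orbit supplied by Lemma~\ref{lm1}.
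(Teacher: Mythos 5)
Your proof is correct relative to the lemmas the paper states, but it takes a genuinely different route from the paper's own argument. Each of your steps checks out: unboundedness of the orbit together with Lemma~\ref{TINF} produces $t_n\to+\infty$ with $\|u(t_n)\|\to+\infty$; Lemma~\ref{lm1} then forces $\int_\Omega F(u(t_n))\,dx\to+\infty$, and the combination of $(f_3)$ and $(H)$ pushes $\int_\Omega g(u(t_n))\,dx$ above $K$, so $\Psi$ vanishes along the tail by \eqref{eq1o}; since $u$ is a strong solution (Lemma~\ref{Rem1}), $E'(u(t_n))v=-\int_\Omega u_t(t_n)v\,dx$, and Corollary~\ref{gozero} turns $\{u(t_n)\}$ into an unbounded $(PS)$ sequence, contradicting Lemma~\ref{PS}. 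The paper instead sets $M'=\liminf_{t\to\infty}\|u(t)\|$ and argues in two cases: when $M'=\infty$ it uses the quantitative decay $V_{u_0}'(t)<-\delta$ of Lemma~\ref{stl} to contradict the boundedness of $V_{u_0}$, and when $M'<\infty$ it constructs excursion intervals $[t_n,T_n]$ on which $\|u\|$ climbs from a fixed level $R$ to $n$, shows these excursions have uniformly bounded length via Lemma~\ref{stl}, and then excludes them using the variation-of-constants formula, the smoothing estimates \eqref{er}--\eqref{es2}, the compact embedding $D(A^{\beta})\hookrightarrow H_0^{1}(\Omega)$ and continuous dependence. Your argument is considerably shorter and bypasses Lemma~\ref{stl} and all the semigroup machinery; what buys you this is that you invoke the full strength of Corollary~\ref{gozero} (decay $\|u_t(t)\|_{L^{2}(\Omega)}\to 0$), whereas at this stage the paper only uses the boundedness of $\|u_t(t)\|_{L^{2}(\Omega)}$ for $t\ge\delta$.

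One caveat worth recording: Corollary~\ref{gozero} rests on Lemma~\ref{ESTINF}, i.e.\ on Henry's Theorem~3.5.2, whose constants in general depend on an a priori bound for the orbit in $H_0^{1}(\Omega)$ --- which is exactly the statement you are proving --- so there is a latent circularity hidden in that corollary if one unpacks its proof. The paper's own use of Lemma~\ref{ESTINF} inside the proof of Lemma~\ref{lm1} has the same feature, so within the paper's framework your proof is on equal footing; a fully self-contained write-up, however, would need to justify the decay of $\|u_t\|_{L^{2}(\Omega)}$ without presupposing boundedness of the orbit, or else fall back on an argument closer to the paper's.
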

\begin{proof} The proof follows the same ideas found in \cite[Theorem 2]{Q}, however as in some points the estimate is different we will write the proof.
Assume the contrary, i.e,  $\displaystyle\limsup_{t\to \infty}||u(t)||=\infty$ and put $M'=\displaystyle\liminf_{t\to \infty}||u(t)||$. If $M'=\infty$, then $||u(t)||\geq K_*$ for $t\geq t_{0}$. Hence $V'(t)< -\delta$ for $t\geq t_{0}$, which contradicts Lemma \ref{lm1}. If $M'< \infty$ we shall derive a contradiction by using the idea found in \cite{Q}. Choose $R> \max\{M', K_*\}+1$. Then there exist sequences $\{t_{n}\}$ and $\{T_{n}\}$, such that $t_{n}< T_{n}<t_{n+1}$, $||u(t_{n})||=R$, $||u(T_{n})||=n $ and $R< ||u(t)||< n$ for $t\in (t_{n}, T_{n})$. Since $V'(t)< -\delta $ for $t\in [t_{n}, T_{n}]$ and the function $V$ is bounded by Lemma \ref{lm1}, then by integrating over $(t_{n}, T_{n}) $ we get $V(T_{n})-V(t_{n})< -\delta (T_{n}-t_{n})$, which is equivalently to $ V(t_{n})-V(T_{n})> \delta (T_{n}-t_{n})> 0$. Thus, $C_*=\frac{2M_*}{\delta}> T_{n}-t_{n}> 0$, where $M_*=\displaystyle \sup_{t \in J(u_0)}V(t)$. \\
	Now using the variation of constants formula as in above we now show $T_{n}-t_{n}> c$ for some $c> 0$. For $t> s\geq 0$ we have 
	\begin{equation}
	u(t)=e^{-A(t-s)}u(s)+\int_{s}^{t}e^{-A(t-\tau)}\Phi(u(\tau))\,d\tau 
	\end{equation}
where $\Phi$ was given in (\ref{tildef}). From \cite[Theorem 1.3.4 and Theorem 1.4.3]{D} we obtain the following estimates 
	\begin{equation}\label{er}
	||u(t)||\leq M||u(s)||+M\int_{s}^{t}\frac{||\Phi(u(\tau))||_{L^{2}(\Omega)}}{(t-\tau)^{1/2}}\,d\tau,
	\end{equation}
	\begin{equation}\label{es2}
	||u(t)||_{\beta}\leq \frac{M}{(t-s)^{\beta-\frac{1}{2}}}||u(s)||+M\int_{s}^{t}\frac{||\Phi(u(\tau))||_{L^{2}(\Omega)}}{(t-\tau)^{\beta}}\,d\tau,
	\end{equation}
	where $M$ is a positive constant and $||.||_{\beta}$ denotes the norm in $D(A^{\beta})$ for $\beta \in (\frac{1}{2},1)$. \par 
	If we take $t=t_{n}+c$ in $\eqref{er}$ for some $c>0$ then by the above assumptions we can get the existence of a sequence $s_{n}\in (t_{n}, t_{n}+c)$ such that $||u(s_{n})||\leq M'+1$. Thus 
	\begin{eqnarray*}
		||u(t_{n}+c)|| & \leq& M||u(s_{n})||+M\int_{s_{n}}^{t_{n}+c}\frac{||\Phi(u(\tau))||_{L^{2}(\Omega)}}{(t_{n}+c-\tau)^{1/2}}\,d\tau\\
		& \leq &
	M(1+M')	+M \int_{t_{n}}^{t_{n}+c}\frac{||\Phi(u(\tau))||_{L^{2}(\Omega)}}{(t_{n}+c-\tau)^{1/2}}\,d\tau
	\end{eqnarray*}
	by the continuity of $\tau \rightarrow \frac{||\Phi(u(\tau))||_{L^{2}(\Omega)}}{(t_{n}+c-\tau)^{1/2}}$ we get  
	$$ \int_{t_{n}}^{t_{n}+c}\frac{||\Phi(u(\tau))||_{L^{2}(\Omega)}}{(t_{n}+c-\tau)^{1/2}}\,d\tau < c^{1/2}||\Phi(u(t_{n}))||_{L^{2}(\Omega)}+\varepsilon,\; \forall\varepsilon >0,  $$
	Using assumption  $||u(t_{n})||=R$ with Trundiger-Moser inequality we obtain 
	$$
	||\Phi(u(t_{n}))||_{L^{2}(\Omega)}\leq L||u(t_{n})||=LR,
	$$
leading to
	$$
	||u(t_{n}+c)||\leq C, \quad \forall n \in \mathbb{N}.
	$$
	Therefore 
	$ ||u(t)||\leq C$ if $t\in [t_{n}, t_{n}+c]$. If we assume $T_{n}  \leq t_{n}+c$ then 
	$n=||u(T_{n})|| \leq C$ but this a contradiction as $n\rightarrow \infty$. Hence we get $T_{n} > t_{n}+c$. Let $\theta \in (0, c)$ by using $\eqref{es2}$, we obtain
	
	$$ ||u(t_{n}+\theta)||_{\beta}\leq M
	\theta^{-(\beta-\frac{1}{2})}||u(t_{n})||+M\int_{t_{n}}^{t_{n}+\theta}\frac{||\Phi(u)||_{L^{2}}}{(t_{n}+\theta-\tau)^{\beta}}\,d\tau  $$
	Since the right hand side is bounded and $D(A^{\beta})$ is compactly embedded into $D(A^{\frac{1}{2}})=H^{1}_0(\Omega)$ \cite[Theorem 3.3.6]{D} . Then we can extract from $\{u(t_{n}+\theta)\}$ a convergent subsequence in $H_{0}^{1}(\Omega)$. Thus, $u(t_{n}+\theta)\rightarrow u_{1}\in \omega(u_{0})$. Since $u(., u_{1}) :[0,\infty) \rightarrow H^{1}_0(\Omega)$ is bounded on $[0, C_*]$ and by using continuous depends we have $u(., u(t_{n}+\theta, u_{0}))$ converge to $u(., u_{1})$ uniformly on $[0, C_*] $, We arrive to a contradiction with 
	$$
	||u(T_{n}-t_{n}-\theta, u(t_{n}+\theta, u_{0}))||=||u(T_{n})||=n\rightarrow \infty.
	$$ 
\end{proof}

 \section{Some remarks about the global existence and blow-up of the solution }
 
The main goal this section is showing some conditions that guarantee the global existence of the solution, and also when the blow-up phenomena holds.

\begin{proposition} \label{local} There is $r>0$ such that if $\|u_0\| < r$, then $T(u_0)=+\infty$. Moreover, $0 \in \mathring{D}_A=int \, D_A$.
\end{proposition}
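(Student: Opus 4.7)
The plan is to leverage the asymptotic stability of $u=0$ already recorded at the end of Section~2. Since $(H)$, $(a_1)$--$(a_2)$, $(f_1)$--$(f_2)$ and $(g)$ imply $\Phi(0)=0$ and $\Phi'(0)=0$, the application of \cite[Theorem 5.1.1]{D} yields that $u=0$ is an asymptotically stable equilibrium for (\ref{1}). By definition, this asymptotic stability provides two ingredients: (i) \emph{Lyapunov stability} -- for every $\varepsilon>0$ there is $\delta(\varepsilon)>0$ such that $\|u_0\|<\delta$ implies $\|u(t,u_0)\|<\varepsilon$ for every $t\in J(u_0)$; and (ii) \emph{local attraction} -- there is $r_0>0$ such that $\|u_0\|<r_0$ implies $u(t,u_0)\to 0$ in $H^1_0(\Omega)$ as $t\to+\infty$. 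My job is simply to translate these two statements into the claim.

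First, I would extract global existence from (i). Fix any $\varepsilon>0$ (for instance $\varepsilon=1$) and set $r_1=\delta(\varepsilon)$. For $\|u_0\|<r_1$, the trajectory $u(\cdot,u_0)$ stays in the ball $B_\varepsilon(0)\subset H^1_0(\Omega)$ throughout $J(u_0)$. Because $\Phi:H^1_0(\Omega)\to X$ is locally Lipschitz (see (\ref{L1*})), the standard continuation principle for semilinear parabolic problems, as used in the local-existence section and available in \cite[Theorem~3.3.4]{D}, implies that either $T(u_0)=+\infty$ or $\|u(t,u_0)\|\to\infty$ as $t\to T(u_0)^-$. The uniform bound rules out the blow-up alternative, so $T(u_0)=+\infty$.

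Second, set $r:=\min\{r_0,r_1\}$. For any $u_0$ with $\|u_0\|<r$, statement (i) gives $T(u_0)=+\infty$ and (ii) gives $u(t,u_0)\to 0$ in $H^1_0(\Omega)$ as $t\to+\infty$. By definition of $D_A$, this means $B_r(0)\subset D_A$. Since $B_r(0)$ is an open neighbourhood of $0$ contained in $D_A$, we conclude $0\in\mathring{D}_A$, which completes the proof.

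I do not expect any serious obstacle, since the analytic core — the spectral/linearisation argument underlying asymptotic stability of the trivial equilibrium for an abstract semilinear evolution equation with $\Phi'(0)=0$ — has already been delegated to \cite[Theorem 5.1.1]{D}. The only care needed is making sure the continuation argument is phrased in the norm of $H^1_0(\Omega)=D(A^{1/2})$ rather than in $L^2(\Omega)$; this is fine because the locally Lipschitz property (\ref{L1*}) and the estimate $\|A^{1/2}e^{-tA}\|_{X\to X}\le M/t^{1/2}$ used earlier in the section furnish exactly the continuation criterion needed in $H^1_0(\Omega)$.
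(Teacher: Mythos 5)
Your proposal is correct and takes essentially the same route as the paper, which simply observes that the claim is an immediate consequence of $u=0$ being an asymptotically stable equilibrium for (\ref{1}) (established at the end of Section 2 via \cite[Theorem 5.1.1]{D}). Your version merely spells out the continuation and attraction arguments that the paper leaves implicit.
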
	 
\begin{proof} This results is an immediate consequence of the fact that $u=0$ is an asymptotically stable equilibrium for (\ref{1}).
\end{proof}

\begin{corollary} \label{wuo} For all $u_0 \in H^{1}_0(\Omega)$, the set $\omega(u_0)$ is formed by stationary points. Moreover, if $u_0 \in \partial D_A$ we also have that $0 \notin \omega(u_0)$.  

\end{corollary}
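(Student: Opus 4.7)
If $T(u_0) < \infty$ then $\omega(u_0) = \emptyset$ and the claim is vacuous; otherwise take $v \in \omega(u_0)$ and pick $t_n \to +\infty$ with $u(t_n, u_0) \to v$ in $H^1_0(\Omega)$. Since $T(u_0) = +\infty$, Corollary \ref{gozero} yields $\|u_t(t_n, u_0)\|_{L^2(\Omega)} \to 0$. The plan is to test the PDE (\ref{1}) at $t = t_n$ against an arbitrary $\varphi \in H^1_0(\Omega)$,
\begin{equation*}
\int_\Omega u_t(t_n)\varphi\,dx + \int_\Omega \nabla u(t_n) \cdot \nabla \varphi\,dx = \int_\Omega f(u(t_n))\varphi\,dx + \int_\Omega \Psi\left(x, u(t_n), \int_\Omega g(u(t_n))\,dx\right)\varphi\,dx,
\end{equation*}
and pass to the limit. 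The time-derivative term vanishes by Corollary \ref{gozero}, and the Laplacian term converges thanks to the strong $H^1_0$-convergence $u(t_n) \to v$. For the nonlinearities I would use the Sobolev embedding ($H^1_0 \hookrightarrow L^p$ for every finite $p$ if $N = 2$, for $p \leq 2^*$ if $N \geq 3$), the growth hypotheses on $f,g$, and in the planar case the Trudinger--Moser inequality (\ref{T}), to deduce $f(u(t_n)) \to f(v)$ in $L^2$ and $\int_\Omega g(u(t_n))\,dx \to \int_\Omega g(v)\,dx$. The continuity of $a$ together with $(a_1)$ then gives $\Psi(\cdot, u(t_n), \int_\Omega g(u(t_n))\,dx) \to \Psi(\cdot, v, \int_\Omega g(v)\,dx)$ in $L^2$, so in the limit $v$ weakly solves $-\Delta v = f(v) + \Psi(x, v, \int_\Omega g(v)\,dx)$ with zero boundary data, i.e., $v$ is a stationary solution of $(P)'$.

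\textbf{Non-attainment of $0$ when $u_0 \in \partial D_A$.} I argue by contradiction: assume $0 \in \omega(u_0)$ and pick $t_n \to +\infty$ with $u(t_n, u_0) \to 0$ in $H^1_0$. Proposition \ref{local} supplies $r > 0$ with $B_r(0) \subset \mathring{D}_A$, so $u(t_N, u_0) \in B_r(0) \subset D_A$ for some large $N$. The semigroup identity $u(t + t_N, u_0) = u(t, u(t_N, u_0))$ then forces $u(s, u_0) \to 0$ as $s \to +\infty$, i.e., $u_0 \in D_A$. It suffices to show $D_A$ is open in $H^1_0(\Omega)$, for then $u_0 \in \mathring{D}_A$ directly contradicts $u_0 \in \partial D_A$. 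Openness of $D_A$ would follow from Proposition \ref{local} combined with Lemma \ref{CD}: given any $w \in D_A$, pick $T$ large enough that $u(T, w) \in B_{r/2}(0)$; by continuous dependence (invoked in tandem with the local well-posedness of Section 2) a small $H^1_0$-neighborhood of $w$ is sent at time $T$ into $B_r(0) \subset D_A$, and the semigroup property places the whole neighborhood in $D_A$.

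The main technical obstacle I foresee is the limit passage in the nonlinear terms when $N = 2$: the uniform $H^1_0$-bound on $\{u(t_n)\}$ has to be combined with the exponential subcritical growth of $f,g$ and Trudinger--Moser (\ref{T}), together with a Vitali-type uniform integrability argument, to justify the $L^2$-convergence of $f(u(t_n))$ and the numerical convergence of $\int_\Omega g(u(t_n))\,dx$. A secondary subtle point is upgrading the conditional statement of Lemma \ref{CD} --- phrased only along approximating sequences already known to have global existence --- into the genuine continuous dependence of the flow needed to deduce openness of $D_A$.
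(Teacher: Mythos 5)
Your proposal is correct and follows essentially the same route as the paper: for stationarity, test the equation at $t=t_n$, kill the $u_t$ term via Corollary \ref{gozero}, and pass to the limit in the nonlinear terms using the strong $H^1_0$-convergence together with the growth conditions (Trudinger--Moser when $N=2$); for $0\notin\omega(u_0)$, combine the asymptotic stability of $0$ with continuous dependence to conclude $u_0\in\mathring{D}_A$, contradicting $u_0\in\partial D_A$. Your explicit two-step unpacking of the second part (first $u_0\in D_A$ via the semigroup property, then openness of $D_A$) is just a more detailed rendering of the one-line argument the paper gives.
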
	
\begin{proof} Let $u \in \omega(u_0)$. Then, there is $t_n \to +\infty$ such that $u_n=u(t_n,u_0) \to u$ in $H^{1}_0(\Omega)$. Since
$$
\int_{0}(u_n)_tv\,dx+\int_{\Omega}\nabla u_n \nabla v\,dx=\int_{\Omega}f(u_n)v\,dx+\int_{\Omega}\Psi\left(x,u_n,\int_{\Omega}g(u_n)\,dx \right)v\,dx, \quad \forall v \in H^{1}_0(\Omega),
$$
taking the limit $n \to +\infty$ and using $(H),(g),(a_1)-(a_2)$ and $(f_1)-(f_2)$, together with the limits $u_n \to u$ in $H^{1}_0(\Omega)$ and $(u_n)_t=u_t(t_n) \to 0$ in $L^{2}(\Omega)$, we get
$$
\int_{\Omega}\nabla u \nabla v\,dx=\int_{\Omega}f(u)v\,dx+\int_{\Omega}\Psi\left(x,u,\int_{\Omega}g(u)\,dx \right)v\,dx, \quad \forall v \in H^{1}_0(\Omega),
$$
showing $u$ is a stationary point. Now we are going to show that $0 \notin \omega(u_0)$. If we assume by contradiction that $0 \in \omega(u_0)$, since $u=0$ is an asymptotically stable equilibrium for (\ref{1}), the continuous dependence of the solutions will imply that $u_0 \in \mathring{D}_A$, which is a contradiction.
\end{proof}

 \begin{proposition}\label{Prp1}
 Let $u_0 \in H^{1}_0(\Omega)$ with 
 $$
\|u_0\| > K_* \quad \mbox{and} \quad E(u_0)<\min\left\{-\hat{M}=\displaystyle \inf_{\|u\|=K_*}E(u),-\frac{K}{2+\gamma}\right\},
 $$
where $K_*$ was given in Lemma \ref{stl}.  Then $T(u_0)<+\infty$.

 \end{proposition}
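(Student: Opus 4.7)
My plan is to argue by contradiction: suppose $T(u_0)=+\infty$ and derive finite-time blow-up of $L(t):=\|u(t)\|_{L^2(\Omega)}^2$. The first step is to show the orbit never re-enters $\{\|v\|\le K_*\}$. If some first $t_1>0$ satisfied $\|u(t_1)\|=K_*$, then by continuity $\|u(t)\|\ge K_*$ on $[0,t_1]$, so Lemma \ref{stl} gives $V'_{u_0}(t)<-\delta$ on that interval, whence
$$
E(u(t_1))\le E(u_0)-\delta t_1 < E(u_0) < -\hat M = \inf_{\|v\|=K_*}E(v),
$$
contradicting $E(u(t_1))\ge-\hat M$. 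Hence $\|u(t)\|>K_*$ for all $t\ge 0$, $V'_{u_0}(t)<-\delta$ everywhere, and in particular $E(u(t))\le E(u_0)-\delta t\to -\infty$.

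Next I would show that the nonlocal perturbation vanishes along the orbit after finite time, reducing the equation to $u_t=\Delta u+f(u)$. Combining $(H)$ with $(f_3)$ yields $(2+\gamma)F(t)\le C(g(t)+1)$; integrated, this means that at any time where $\int_\Omega g(u)\,dx<K$ one has the fixed bound $\int_\Omega F(u)\,dx\le\frac{C(K+|\Omega|)}{2+\gamma}$. But $\int_\Omega F(u(t))\,dx=\frac{1}{2}\|u(t)\|^2-E(u(t))\ge -E(u(t))\to+\infty$ by the previous step, so there exists $t^*>0$ with $\int_\Omega g(u(t))\,dx\ge K$, and therefore $\Psi\equiv 0$ by (\ref{eq1o}), for every $t\ge t^*$. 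The quantitative hypothesis $E(u_0)<-K/(2+\gamma)$ is what makes this threshold argument produce a uniform statement rather than a conditional one.

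For $t\ge t^*$ the equation is $u_t=\Delta u+f(u)$; testing with $u$ and invoking $(f_3)$ yields
$$
\tfrac{1}{2}L'(t)=-\|u(t)\|^2+\int_\Omega f(u)u\,dx=-2E(u(t))+\int_\Omega(f(u)u-2F(u))\,dx\ge -2E(u(t))+\gamma\int_\Omega F(u(t))\,dx.
$$
From (\ref{NEWEQ1}) together with H\"older's inequality I obtain $\int_\Omega F(u)\,dx\ge c\,|\Omega|^{-\gamma/2}L(t)^{1+\gamma/2}-c'$, and since $-2E(u(t))\to+\infty$ absorbs the constant $\gamma c'$ for $t$ large, one arrives at the superlinear differential inequality $L'(t)\ge C\,L(t)^{1+\gamma/2}$. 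The standard comparison $\frac{d}{dt}L(t)^{-\gamma/2}\le -\frac{\gamma C}{2}$ then forces $L(t)^{-\gamma/2}$ to become negative in finite time, so $L$ blows up in finite time, contradicting $T(u_0)=+\infty$.

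I expect the delicate step to be the second one: $\Psi$ is sign-indefinite and cannot be absorbed directly into the energy identity, so one has to use $(H)$ to convert the coercivity produced by Lemma \ref{stl} into the permanent entrance of the orbit into the regime where $\Psi$ vanishes, after which the classical Levine-type blow-up computation applies cleanly.
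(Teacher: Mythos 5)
Your argument is correct, and its first two thirds track the paper's proof: step 1 (the orbit never returns to the sphere $\|u\|=K_*$, via Lemma \ref{stl} and the definition of $\hat M$) is identical, and step 2 reaches the same conclusion that $\Psi$ vanishes along the orbit, though by a slightly different route --- the paper uses the hypothesis $E(u_0)<-K/(2+\gamma)$ together with the identity $\|u(t)\|^2=2\bigl(E(u(t))+\int_\Omega F(u(t))\,dx\bigr)$ to show $\int_\Omega g(u(t))\,dx>K$ for \emph{every} $t\in J(u_0)$, whereas you use the strict decay $E(u(t))\le E(u_0)-\delta t\to-\infty$ to get it only for $t\ge t^*$, which suffices but makes your remark about the role of the hypothesis $E(u_0)<-K/(2+\gamma)$ inaccurate (your version never actually invokes it). The genuine divergence is in the blow-up step: the paper runs the Levine concavity method on $H(t)=\frac12\int_0^t\|u(s)\|_{L^2}^2\,ds$, combining the exponential lower bound (\ref{eq02*}) with the concavity of $H^{-\gamma/2}$, while you derive the first-order superlinear inequality $L'(t)\ge C\,L(t)^{1+\gamma/2}$ for $L(t)=\|u(t)\|_{L^2}^2$ directly from $(f_3)$, the bound $F(t)\ge c_1|t|^{2+\gamma}-c_2$ and H\"older, and conclude by elementary ODE comparison. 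Your route is shorter and avoids the bookkeeping with $H'(0)$ and the auxiliary time $T_1$ in (\ref{eqq02}); the paper's concavity functional is the more standard tool and is what the cited reference \cite{T.C} uses. One shared caveat (a defect of the paper, not of your proposal): both proofs apply Lemma \ref{stl}, which is stated only for $u_0\in\partial D_A$, to an initial datum that is not assumed to lie in $\partial D_A$; what is really being used is the combination of (\ref{IMPEST}) with Corollary \ref{1M} and the Palais--Smale argument of Lemma \ref{stl}, which does extend to this setting, but strictly speaking neither proof says so.
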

 \begin{proof} To begin with, we claim that 
\begin{equation} \label{NESEST}
 \|u(t)\| > K_*, \quad \forall t \in J(u_0).
\end{equation}
Indeed, otherwise there is $T_1 \in J(u_0)$ such that 
 $$
 \|u(T_1)\|=K_* \quad \mbox{and} \quad \|u(t)\| \geq K_* \quad \forall t \in [0,T_1]. 
 $$	
 By Lemma \ref{stl}, we derive that
 $$
 V_{u_0}(t) \leq V_{u_0}(0), \quad \forall t \in [0,T_1],
 $$	
 implying that 
 $$
 E(u(T_1)) \leq E(u_0).
 $$
 Thereby, 
 $$
 E(u(T_1)) < -\hat{M},
 $$
 which is absurd, because $\|u(T_1)\|=K_*$. The inequality (\ref{NESEST}) gives that 
 \begin{equation} \label{NESESTo}
\int_{\Omega}g(u(t))> K, \quad \forall t \in J(u_0).
 \end{equation}
 Indeed, if $\int_{\Omega}g(u(t)) \leq K$ for some $t \in J(u_0)$, the equality below 
 $$
 \|u(t)\|^{2}=2\left(E(u(t))+\int_{\Omega}F(u(t))\,dx\right)
 $$
together with $(H)$ and $(f_3)$ yields  
 $$
\|u(t)\|^{2} \leq 2\left(E(u(t))+\frac{K}{2+\gamma}\right).
$$
Recalling that (\ref{NESEST}) together with Lemma \ref{stl} yields $E(u(t)) \leq E(u_0)$ for all $t \in J(u_0)$, we get 
 $$
 \|u(t)\|^{2}\leq 2\left(E(u_0)+\frac{K}{2+\gamma}\right) <0, 
 $$
 which is absurd, proving (\ref{NESESTo}).
 
 Now we are ready to prove that $T(u_0)<+\infty$. Have this in mind, we will apply the so called concavity method  as in \cite{T.C}. For every $t> 0$, let 
 \begin{equation}\label{re}
 H(t)=\frac{1}{2} \int_{0}^{t}||u(s)||^{2}_{L^{2}(\Omega)}\,ds
 \end{equation}
 then by differentiating $\eqref{re}$ we find  $H'(t)=\frac{1}{2}||u(t)||^{2}_{L^{2}(\Omega)}$, differentiating $H'$ and using (\ref{NESESTo}), we obtain 
 $$
 H''(t)=\int_{\Omega} u_{t}(t)u(t)\,dx=-\int_{\Omega} |\nabla u(t)|^{2}\,dx+\int_{\Omega} f( u(t))u(t)\,dx.
 $$
 From $(f_3)$,  
 \begin{equation}\label {eq0}
 H''(t) \geq -\int_{\Omega} |\nabla u(t)|^{2}\,dx+(2+\gamma)\int_{\Omega} F( u(t))\,dx, 
 \end{equation}
 
 Multiplying the equation $\eqref{1}$ by $u_{t}$ and by integrating over $\Omega $ and using again (\ref{NESEST}),  we find
 $$ ||u_{t}(t)||_{L^{2}(\Omega)}^{2}=\partial_{t}\left(-\frac{1}{2}\int_{\Omega} |\nabla u(t)|^{2}\,dx+\int_{\Omega} F(u(t))\,dx\right).$$
 Now integrating with respect to $t$, we obtain 
 \begin{equation}\label{eq00}
 \int_{0}^{t}||u_{t}(s)||_{L^{2}(\Omega)}^{2}\,ds=-\frac{1}{2}\int_{\Omega} |\nabla u(t)|^{2}\,dx+\int_{\Omega} F(u(t))\,dx+E(u_0).
 \end{equation}
 By substituting $\eqref{eq00}$ in $\eqref{eq0}$ we derive that 
 \begin{equation}\label{eq01}
 H''(t)\geq (2+\gamma) \int_{0}^{t}||u_{t}(s)||_{L^{2}}^{2}\,ds+\frac{\gamma}{2}\int_{\Omega} |\nabla u(t)|^{2}\,dx- (\gamma+2)E(u_0).
 \end{equation}
 Since $-(\gamma+2)E(u_0)> 0$ hence this gives the following inequalities  
 \begin{equation}\label{eq02*}
 \frac{d}{dt} \|u(t)\|_{L^{2}(\Omega)}^{2} \geq C\|u(t)\|_{L^{2}(\Omega)}^{2}, \quad \forall t \in J(u_0)
 \end{equation}
 and
 \begin{equation}\label{eq02}
 H''(t)\geq (2+\gamma) \int_{0}^{t}||u_{t}(s)||_{L^{2}(\Omega)}^{2}\,ds, \quad \forall t \in J(u_0).
 \end{equation}
 The inequality (\ref{eq02*}) ensures that $\|u(t)\|_{L^{2}(\Omega)}^{2}$ has an exponential growth, that is, there is $C>0$ such that 
 $$
 \|u(t)\|_{L^{2}(\Omega)}^{2} \geq c_1e^{c_2t}, \quad \forall t \in J(u_0).
 $$ 
 Thus, assuming by contradiction that $T(u_0)=+\infty$, we have that 
 \begin{equation}\label{eq02**}
 \|u(t)\|_{L^{2}(\Omega)} \to +\infty \quad \mbox{when} \quad t \to +\infty.
 \end{equation} 
 On the other hand, multiplying $\eqref{eq02}$ by $V$ we find 
 $$ 
 H(t)H''(t)\geq \frac{(2+\gamma)}{2} \left(\int_{0}^{t}||u_{t}(s)||_{L^{2}}^{2}\,ds\right)\left(\int_{0}^{t}||u(s)||_{L^{2}}^{2}\,ds\right).$$
Using the H\"older inequality, we get 
 $$ 
 H(t)H''(t)\geq \frac{(2+\gamma)}{2} (H'(t)-H'(0))^{2}.
 $$
 From (\ref{eq02**}) this implies that there is $T_1>0$ such that   
 \begin{equation}\label{eqq02}
 H(t) H''(t)\geq \frac{(2+\gamma)}{2} (H'(t))^{2}, \quad \forall t \geq T_1.
 \end{equation}
 Now we put $l(t)=H^{-\gamma/2}(t)$, it possible to prove after some calculations that $\eqref{eqq02}$ ensures that $l$ is a concave function for $t \geq T_1$, which is impossible because $l(t) \geq 0$ for all $t \geq 0$, and by (\ref{eq02*}), $l(t) \to 0$ when $t \to +\infty$.  Thus, $T(u_0)< \infty$, finishing the proof.
\end{proof}

As a byproduct of the study above we have the following result.

\begin{lemma} $\partial D_A \not= \emptyset$.
	
\end{lemma}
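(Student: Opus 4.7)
The plan is to derive $\partial D_A \neq \emptyset$ as a direct topological consequence of the two regime results just proved: Proposition \ref{local} gives initial data that relax to $0$, while Proposition \ref{Prp1} gives initial data that blow up in finite time. Once both regimes are known to be nonempty, connectedness of the Banach space $H^1_0(\Omega)$ will finish the argument.

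First, I would observe that $0 \in D_A$ by Proposition \ref{local}, so $D_A$ is nonempty. Next, I would produce a point outside $D_A$. Pick any nonzero $\varphi \in H^1_0(\Omega)$. By $(f_3)$ one has $F(s) \geq c_1|s|^{2+\gamma} - c_2$ for some $c_1, c_2 > 0$, hence
$$
E(t\varphi) \;\leq\; \frac{t^2}{2}\|\varphi\|^2 \;-\; c_1\, t^{2+\gamma}\int_\Omega |\varphi|^{2+\gamma}\,dx \;+\; c_2|\Omega| \;\longrightarrow\; -\infty \quad \text{as } t \to +\infty,
$$
while $\|t\varphi\| \to +\infty$. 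Therefore, for $t$ large enough, $v := t\varphi$ meets both requirements of Proposition \ref{Prp1}, namely $\|v\| > K_*$ and $E(v) < \min\{-\hat{M}, -K/(2+\gamma)\}$; that proposition then yields $T(v) < +\infty$, so $v \notin D_A$.

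With $D_A$ both nonempty and a proper subset of $H^1_0(\Omega)$, I would conclude by a standard topological argument. Suppose for contradiction that $\partial D_A = \emptyset$. Since $\partial D_A = \overline{D_A} \setminus \mathring{D}_A$ and $\mathring{D}_A \subset D_A \subset \overline{D_A}$, this forces $D_A = \mathring{D}_A = \overline{D_A}$, so $D_A$ is simultaneously open and closed in $H^1_0(\Omega)$. As $H^1_0(\Omega)$ is a Banach space, hence connected, this means $D_A = \emptyset$ or $D_A = H^1_0(\Omega)$, contradicting the two previous steps.

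I do not foresee a substantive obstacle here: the genuine analytical work has already been carried out in proving the asymptotic stability of $0$ (Proposition \ref{local}) and the concavity-method blow-up in Proposition \ref{Prp1}. The only point requiring any care is making sure the hypotheses of Proposition \ref{Prp1} are actually realizable, which is settled by the displayed estimate above using $(f_3)$.
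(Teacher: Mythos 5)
Your proposal is correct and follows essentially the same route as the paper: both exhibit a point of $D_A$ via the asymptotic stability of $0$ and a point outside $D_A$ via Proposition \ref{Prp1} applied to $t\varphi$ for large $t$, and then conclude by a connectedness argument (the paper uses the segment $[sv,tv]$, you use the clopen argument on all of $H^1_0(\Omega)$ --- the same topological fact). Your verification that $E(t\varphi)\to-\infty$ via $(f_3)$ is a detail the paper leaves implicit, and it is carried out correctly.
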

\begin{proof} Fix $v \in H^{1}_0(\Omega) \setminus \{0\}$. Then $sv \in int\, D_A$ for $s$ small enough and $E(tv)<-\hat{M}$ for $t$ large enough, then by Proposition \ref{Prp1} that $tv \not\in D_A$.  Therefore, as $[sv,tv]$ is connected with $[sv,tv] \cap D_A \not= \emptyset$ and $[sv,tv] \cap (D_A)^c \not= \emptyset$, there is $s_0 \in [s,t]$ such that 
$$
s_0v \in \partial D_A,
$$	
showing the result.	
\end{proof}

\section{Existence of a nontrivial stationary solution}

\begin{theorem} If $u_0 \in \partial D_A$, we have that $\omega(u_0) \not= \emptyset $.  Hence, by Corollary \ref{wuo}, the elliptic problem
$$
\left\{\begin{array}{l}
-\Delta u =f(u)+\Psi\left(x,u,\int_{\Omega}g(u)\,dx \right),  \quad x \in \Omega \\
u=0, \hspace{2 cm} x \in \partial \Omega.  
\end{array}\right.
\leqno{(P)}
$$
has a nontrivial solution.
\end{theorem}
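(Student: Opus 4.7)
The plan is to compactify a long-time trajectory of the parabolic equation (\ref{1}) in $H_0^1(\Omega)$ and read off a stationary point from its limit. Since $u_0\in\partial D_A$, Lemma \ref{TINF} yields $T(u_0)=+\infty$, Proposition \ref{tth1} gives that the orbit $O(u_0)$ is bounded in $H_0^1(\Omega)$, and Corollary \ref{gozero} gives $\|u_t(\cdot,u_0)\|_{L^2(\Omega)}\to 0$ as $t\to+\infty$. Pick any sequence $t_n\to+\infty$ and set $u_n:=u(t_n,u_0)$. By the boundedness of $O(u_0)$, after passing to a subsequence I may assume $u_n\rightharpoonup u_*$ in $H_0^1(\Omega)$, $u_n\to u_*$ a.e.\ in $\Omega$, and $u_n\to u_*$ in $L^r(\Omega)$ for every $r\in[1,+\infty)$ if $N=2$ (resp.\ $r\in[1,2^*)$ if $N\geq 3$).

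To upgrade weak to strong convergence in $H_0^1(\Omega)$, I would test the pointwise-in-time identity
\[
-\Delta u_n = f(u_n)+\Psi_n - u_t(t_n),\qquad \Psi_n:=\Psi\!\left(x,u_n,\int_\Omega g(u_n)\,dx\right),
\]
against $u_n-u_*$, obtaining
\[
\int_\Omega \nabla u_n\cdot\nabla(u_n-u_*)\,dx = \int_\Omega f(u_n)(u_n-u_*)\,dx + \int_\Omega \Psi_n(u_n-u_*)\,dx - \int_\Omega u_t(t_n)(u_n-u_*)\,dx.
\]
The $u_t$-term is controlled by $\|u_t(t_n)\|_{L^2(\Omega)}\|u_n-u_*\|_{L^2(\Omega)}$ and tends to $0$ by Corollary \ref{gozero}. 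The $\Psi_n$-term vanishes thanks to Claim \ref{ESTIMATIVAPSI}, which bounds $\|\Psi_n\|_{L^2(\Omega)}$ uniformly, combined with the strong $L^2$ convergence $u_n\to u_*$. For the $f$-term I use condition $(H)$, which gives $|f(t)|^2\leq C(g(t)+1)$, so
\[
\|f(u_n)\|_{L^2(\Omega)}^2 \leq C\int_\Omega g(u_n)\,dx + C|\Omega|,
\]
a uniformly bounded quantity since $\{u_n\}$ is bounded in $H_0^1(\Omega)$ and $g$ satisfies condition $(g)$ (invoking the Moser--Trudinger inequalities (\ref{M})--(\ref{T}) when $N=2$ and the Sobolev embedding when $N\geq 3$). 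H\"older's inequality and the strong $L^2$ convergence then finish this term. Combined with $\int_\Omega\nabla u_*\cdot\nabla(u_n-u_*)\,dx\to 0$ from weak convergence, this gives $\|u_n-u_*\|^2\to 0$, hence $u_n\to u_*$ in $H_0^1(\Omega)$, so $u_*\in\omega(u_0)$ and in particular $\omega(u_0)\neq\emptyset$.

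By Corollary \ref{wuo}, every element of $\omega(u_0)$ is a stationary point of (\ref{1}), i.e.\ a weak solution of $(P)'$, and moreover $0\notin\omega(u_0)$ since $u_0\in\partial D_A$; therefore $u_*$ is a nontrivial weak solution of $(P)'$. Multiplying the equation $-\Delta u_* = f(u_*)+\bigl(\tfrac{1}{a(x,\int_\Omega g(u_*)\,dx)}-1\bigr)f(u_*)$ by $a(x,\int_\Omega g(u_*)\,dx)>0$ (using $(a_1)$) recovers $(P)$, so $u_*$ solves the original nonlocal problem. The main obstacle is the uniform $L^2$ control of the nonlinearity at the limiting points of the orbit in the $N=2$ exponential-growth regime, which is precisely the purpose of condition $(H)$ paired with the subcritical growth $(I)$ and Moser--Trudinger; the nonlocal perturbation $\Psi_n$ is then a mild addition handled uniformly by Claim \ref{ESTIMATIVAPSI}.
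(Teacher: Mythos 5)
Your proposal is correct and follows essentially the same route as the paper: global existence via Lemma \ref{TINF}, boundedness of the orbit via Proposition \ref{tth1}, the decay $\|u_t(\cdot,u_0)\|_{L^{2}(\Omega)}\to 0$ via Corollary \ref{gozero}, an upgrade from weak to strong $H_0^{1}(\Omega)$-convergence by testing the equation at time $t_n$, and the conclusion via Corollary \ref{wuo}. The only (harmless) difference is in the strong-convergence step, where you test directly against $u_n-u_*$ and use uniform $L^{2}$ bounds on $f(u_n)$ and $\Psi_n$ together with $u_n\to u_*$ in $L^{2}(\Omega)$, whereas the paper passes to the limit in the individual integrals $\int_{\Omega}f(u_n)u_n\,dx$, $\int_{\Omega}\Psi_n u_n\,dx$, etc.; your version is slightly more economical.
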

\begin{proof} By Lemma \ref{TINF}, we know that $T(u_0)=+\infty$. For any sequence $\{t_n\} \subset  [0,+\infty)$ with $t_n \to +\infty$, we have $\{E(u_n)\}$ is bounded, because $V_{u_0}$ is a bounded function on $[0,+\infty)$, where $u_n=u(t_n,u_0)$. By Proposition \ref{tth1} the sequence $\{u_n\}$ is bounded, then for some subsequence there is $u_s \in H^{1}_0(\Omega)$ such that 
$$
u_n  \rightharpoonup u_s \quad \mbox{in} \quad H_0^{1}(\Omega) \quad \mbox{as} \quad n \to +\infty.
$$
On the other hand, by Lemma \ref{ESTINF}, $\|u_t\|_{L^{2}(\Omega)} \to 0$ when $t \to +\infty$, which leads to 
$$
\sup_{\|v\|\leq 1}\left[E'(u_n)v-\int_{\Omega}\Psi\left(x,u_n,\int_{\Omega}g(u_n)\,dx \right)v\,dx\right] \to 0 \quad \mbox{as} \quad n \to +\infty.  
$$
Therefore, 
\begin{equation} \label{Z1}
E'(u_n)u_n-\int_{\Omega}\Psi\left(x,u_n,\int_{\Omega}g(u_n)\,dx \right)u_n\,dx=o_n(1)
\end{equation}	
and
\begin{equation} \label{Z2}
E'(u_n)u_s-\int_{\Omega}\Psi\left(x,u_n,\int_{\Omega}g(u_n)\,dx \right)u_s\,dx=o_n(1).
\end{equation}	 	
Recalling that 
$$
\|u_n-u_s\|^{2}=E'(u_n)u_n+\int_{\Omega}f(u_n)u_n\,dx- E'(u_n)u_s-\int_{\Omega}f(u_n)u_s\,dx,
$$	
the limits below
$$
\int_{\Omega}f(u_n)u_n\,dx \to \int_{\Omega}f(u_s)u_s\,dx
$$
and 
$$
\int_{\Omega}f(u_n)u_s\,dx \to \int_{\Omega}f(u_s)u_s\,dx,
$$
lead to 
$$
\|u_n-u_s\|^{2}=E'(u_n)u_n- E'(u_n)u_s+o_n(1).
$$
This combined with (\ref{Z1})-(\ref{Z2}) gives 
$$
\|u_n-u_s\|^{2}=\int_{\Omega}\Psi\left(x,u_n,\int_{\Omega}g(u_n)\,dx \right)u_n\,dx-\int_{\Omega}\Psi\left(x,u_n,\int_{\Omega}g(u_n)\,dx \right)u_s\,dx+o_n(1).
$$	
On the other hand, a direct computation gives  
$$
\lim_{n \to +\infty}\int_{\Omega}g(u_n)\,dx=\int_{\Omega}g(u_s)\,dx,
$$
therefore, 
$$
\lim_{n \to +\infty}\int_{\Omega}\Psi\left(x,u_n,\int_{\Omega}g(u_n)\,dx \right)u_n\,dx=\int_{\Omega}\Psi\left(x,u_s,\int_{\Omega}g(u_s)\,dx \right)u_s\,dx
$$
and
$$
\lim_{n \to +\infty}\int_{\Omega}\Psi\left(x,u_n,\int_{\Omega}g(u_n)\,dx \right)u_s\,dx=\int_{\Omega}\Psi\left(x,u_s,\int_{\Omega}g(u_s)\,dx \right)u_s\, dx.
$$
Therefore, 
$$
\|u_n-u_s\|^{2}=o_n(1),
$$	
from where it follows that
$$
u_n \to u_s \quad \mbox{in} \quad H^{1}_0(\Omega),
$$
and so, $u_s \in \omega(u_0)$. Now the result follows from Corollary \ref{wuo}.  
\end{proof}

\end{document}